\numberwithin{equation}{section}
\newtheorem{theorem}{Theorem}[section]
\newtheorem{proposition}[theorem]{Proposition}
\newtheorem{corollary}[theorem]{Corollary}
\newtheorem{question}[theorem]{Question}
\theoremstyle{definition}
\newtheorem{definition}[theorem]{Definition}
\newtheorem{example}[theorem]{Example}
\newtheorem{property}[theorem]{Property}
\theoremstyle{remark}
\newtheorem{remark}[theorem]{Remark}
\numberwithin{equation}{section}
\title{Holomorphic semigroups of finite shift in the unit disc}
\author[D. Cordella]{Davide Cordella$^\dag$}
\address{D. Cordella: Dipartimento di Matematica, Universit\`a di Roma `Tor Vergata', Via della Ricerca Scientifica 1, 00133, Roma, Italia.} 
\email{cordella@mat.uniroma2.it}
\subjclass[2020]{Primary 37C10, 30C35; Secondary 30D05, 30C80, 37F99, 37C25}
\keywords{Semigroups of holomorphic functions; Dynamical systems; Shift; Hyperbolic geometry; Koenigs domains; Speeds of a semigroup}
\thanks{$^\dag$Partially supported by PRIN {\sl Real and Complex
		Manifolds: Topology, Geometry and holomorphic dynamics} n.2017JZ2SW5 and  by the MIUR Excellence Department Project awarded to the
	Department of Mathematics, University of Rome Tor Vergata, CUP E83C18000100006}
\begin{document}
	
\begin{abstract}
	We give three necessary and sufficient conditions so that a parabolic holomorphic semigroup $(\phi_t)$ in the unit disc is of finite shift. One is in terms of the asymptotic behavior of speeds of convergence, the second one is related to the hyperbolic metric of its Koenigs domain $\Omega$ and the latter one deals with Euclidean properties of $\Omega$.
\end{abstract}

\maketitle

\tableofcontents

\section{Introduction}
The study of continuous semigroups of holomorphic self-maps in the unit disc $\mathbb{D}:=\{z\in\mathbb{C}:|z|<1\}$ (or simply semigroups in $\mathbb D$ for short)
\[
[0,+\infty)\ni t\longmapsto \phi_t\in\mathsf{Hol}(\mathbb{D},\mathbb{D})
\]
began in the nineteenth century and nowadays provides an impressive theory which is of great interest for its several applications and also from a theoretical point of view, as it combines several aspects from analysis, geometry and dynamics. For more details on this subject, see for instance the recent monograph \cite{BCDbook}, the first chapter in \cite{Ababook89} and also \cite{EliShobook10,Shobook01}.

It is well known that, if for some $t_0>0$ the map  $\phi_{t_0}$ has a fixed point in $\mathbb D$, then the $\phi_t$'s have a common fixed point in $\mathbb D$. In this case, the semigroup is called an \emph{elliptic} semigroup.

In this paper the focus is on \emph{non-elliptic} semigroups, i.e. the case in which the maps $\phi_t$ have no fixed points in $\mathbb D$. Then there exist a unique point $\tau\in \partial\mathbb D$, called the \emph{Denjoy-Wolff point}, such that all the orbits $t\mapsto \phi_t(z)$ where $z\in\mathbb{D}$ converge to $\tau$. 

Every non-elliptic semigroup $(\phi_t)$ admits an holomorphic model -- unique up to translations -- given by a univalent function $h:\mathbb D \to \mathbb C$ (the so-called Koenigs function) such that  $h(\mathbb D)+it\subset h(\mathbb D)$  and $h\circ \phi_t(z)=h(z)+it$ for all $t\geq 0$ and $\bigcup_{t\geq 0}(h(\mathbb D)-it)$ is either a strip, a right (or left) vertical half-plane or the whole complex plane $\mathbb C$. In the first case the semigroup is called \emph{hyperbolic}, in the second \emph{parabolic of positive hyperbolic step} and in the third case \emph{parabolic of zero hyperbolic step} (see, e.g., \cite{BCDbook} for details, in particular Theorem 9.3.5 and Proposition 9.3.10 for uniqueness).

Let $(\phi_t)$ be a non-elliptic semigroup in $\mathbb D$. The Denjoy-Wolff Theorem (\cite[Theorem 1.8.4]{BCDbook}) states that the horocycles at $\tau$, which are discs contained in $\mathbb{D}$  tangent to $\partial \mathbb D$ at $\tau$, are invariant  for $(\phi_t)$. More precisely, given a horocycle $\mathcal{E}$ at $\tau$, for every $z\in \mathcal{E}$, the orbit $[0,+\infty)\ni t\mapsto \phi_t(z)$ belongs to $\mathcal{E}$. However, if $\mathcal{E}'\subsetneq \mathcal{E}$ is another horocycle and $z\in \mathcal{E}\setminus \mathcal{E}'$, it might happen that $\phi_{t_0}(z)\in \mathcal{E}'$ for some $t_0>0$ -- and hence for every $t\geq t_0$ -- or $\phi_t(z)\not\in \mathcal{E}'$ for every $t\geq 0$. A non-elliptic semigroup is said to be of \emph{finite shift} if for any orbit $z\mapsto\phi_t(z)$ there exists a horocycle at the Denjoy-Wolff point which does not intersect the given orbit. Clearly, if a semigroup in $\mathbb D$ is a finite shift semigroup, all  orbits  convergence tangentially to the Denjoy-Wolff point, but the converse is not true.

The notion of finite shift has been studied for iteration of holomorphic self-maps of the unit disc in \cite{PC, CDP} and in \cite[Section 17.7]{BCDbook} and \cite{Kar} for semigroups in $\mathbb D$.

All hyperbolic semigroups and  parabolic semigroups of zero hyperbolic step are of infinite shift (see for instance \cite[Prop. 17.7.3]{BCDbook}). Hence, only parabolic semigroups of \emph{positive} hyperbolic step may have finite shift.

Therefore, from now on in this introduction, $(\phi_t)$ denotes a parabolic semigroup in $\mathbb D$ of positive hyperbolic step  and $h$ its Koenigs function. Let  $\Omega:=h(\mathbb D)$ be the associated \emph{Koenigs domain}. As recalled before, in this case we may have, up to translations, that $
\Omega\subset \mathbb H$ or $
\Omega\subset -\mathbb H$, where $\mathbb{H}=\{z\in\mathbb{C}:\mathsf{Re}\,z>0\}$. The two cases are not equivalent in the sense of holomorphic models (see \cite[Remark 9.3.6]{BCDbook}), but for our purposes they are essentially the same, as only some change of signs should be considered. So we will assume that $\Omega\subset \mathbb H$ for the sake of simplicity.

By \cite[Theorem 17.7.6]{BCDbook}, if there exists $a>0$ such that $a+\mathbb{H}\subset\Omega\subset\mathbb{H}$ then $(\phi_t)$ is of finite shift, while, it is of infinite shift if $\Omega$ does not contain any vertical semi-sector, i.e. sets of the form $p_0+iV(\alpha,0))$ where $p_0\in\mathbb{H}$ and $V(\alpha,0)=\{z\in\mathbb{H}:-\alpha<\arg z<0\}$ for some $\alpha\in(0,\pi/2)$. These results suggest that $(\phi_t)$ is of finite shift if and only if $\Omega$ `asymptotically contains a half space'.

Karamanlis \cite{Kar} made precise the previous idea and gave a necessary and sufficient condition for $(\phi_t)$ to be of finite shift in terms of an integral that estimates `how much' larger and larger semi-sectors are contained in $\Omega$ (see Theorem~\ref{thm_karamanlis} for the precise statement).

In this paper, we provide other characterizations of finite shift. To this end, we need to introduce and recall some notations.

If $(\phi_t)$ is of finite shift, then $\Omega$ is `semi-conformal' at infinity (see Proposition \ref{prop: shift_reg}) and thus it has \emph{inner tangent at infinity along the positive real semi-axis}:\begin{equation}\label{eq:intro_cond}
	\text{for all $\beta\in(0,\pi/2)$ there exists $r_\beta>0$ such that $\Gamma(\beta,r_\beta)\subset\Omega$},
\end{equation} where $\Gamma(\beta,r_\beta):=\{z:\mathsf{Re}\,z>0,|\arg z|<\beta,|z|>r_\beta\}$. In particular the positive real semi-axis is eventually contained in $\Omega$.

In \cite{Br} (see also \cite[Chapter 16]{BCDbook}), three quantities called \emph{speeds of convergence} have been attached to non-elliptic semigroups. These are the relative hyperbolic distances between the origin, the point $\phi_t(0)$ and its hyperbolic projection to the diameter joining the origin to the Denjoy-Wolff point, seen as functions of the parameter $t\geq 0$. The one from the origin to the projection is the \emph{orthogonal speed} $v^O_\phi(t)$, while the distance of the projection from $\phi_t(0)$ is the \emph{tangential speed} $v^T_\phi(t)$.

\begin{theorem}\label{main} Let $(\phi_t)$ be a parabolic semigroup in $\mathbb D$ of positive hyperbolic step. Let $\tau\in\partial \mathbb D$ be its Denjoy-Wolff point, $h$ the associated Koenigs function and $\Omega=h(\mathbb D)$. Then the following are equivalent:
\begin{enumerate}[label=(\alph*)]
\item\label{t_sh} $(\phi_t)$ is of finite shift,
\item\label{t_sp} there exists $C>0$ such that for all $t\geq 1$,
 \[
	\left|v^O_\phi(t)-\frac{1}{2}\log t\right|+ \left|v^T_\phi(t)-\frac{1}{2}\log t\right|\leq C.
	\]
\item \label{t_hy} there exists $r_0>0$ such that $[r_0,+\infty)\subset \Omega$, the curve $[r_0,+\infty)\ni r\mapsto r\in \Omega$ is a $k_\Omega$-quasi-geodesic and
	\[
	\limsup\limits_{r\to+\infty}\,k_\Omega \left(r, h\left(\frac{r-1}{r+1}\tau\right)\right)=\limsup\limits_{r\to+\infty}\,k_\mathbb{\mathbb D} \left(h^{-1}(r), \frac{r-1}{r+1}\tau\right)<+\infty,
	\]
\item\label{t_ser} $\Omega$ satisfies $\eqref{eq:intro_cond}$ and, if $r_0>0$ is such that $[r_0,+\infty)\subset\Omega$
	\[
	\sum_{j\geq \lceil r_0 \rceil} \frac{1}{\inf\{y>0:j-i y\notin \Omega_\ast\}}<+\infty,
	\]
where $\Omega_\ast:=\{z\in\Omega:z+t\in\Omega\text{ for all }t\geq 0\}$.
\end{enumerate}
\end{theorem}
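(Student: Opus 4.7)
My plan is to prove the equivalences via the cycle (a) $\Rightarrow$ (c) $\Rightarrow$ (b) $\Rightarrow$ (a) together with the separate equivalence (a) $\Leftrightarrow$ (d), the latter handled through the Karamanlis integral condition (Theorem~\ref{thm_karamanlis}). The passage (a) $\Leftrightarrow$ (d) should be the quickest: by Proposition~\ref{prop: shift_reg}, finite shift forces \eqref{eq:intro_cond}, so $\Omega_\ast$ contains a neighborhood of $[r_0,+\infty)$ and the vertical width $y(x) := \inf\{y>0 : x-iy\notin \Omega_\ast\}$ is a positive, locally bounded function on $[r_0,+\infty)$. Karamanlis's integral can be rewritten, up to a bounded multiplicative perturbation, as $\int_{r_0}^{\infty} dx/y(x)$, and a standard integral--to--series comparison (using local boundedness of $y$ together with the fact that $\Omega_\ast$ is starlike in the real direction, so the oscillation of $y$ on any unit interval is controlled) yields equivalence with the series in (d).

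For (a) $\Rightarrow$ (c), I would first invoke Proposition~\ref{prop: shift_reg} to get \eqref{eq:intro_cond}, hence $[r_0,+\infty)\subset\Omega$. The containment $\Gamma(\beta,r_\beta)\subset\Omega$ for every $\beta\in(0,\pi/2)$ then implies, via a Schwarz lemma comparison between $k_\Omega$ and the explicit hyperbolic metric of the cut sector $\Gamma(\beta,r_\beta)$, that the real ray is a $k_\Omega$-quasi-geodesic (geodesics between two real points in a narrow sector stay near the real axis). For the bounded-distance condition I would exploit the conformal identity $k_\Omega(h(0), h(0)+it) = k_\mathbb{D}(0, \phi_t(0))$ and show that the image under $h$ of the radial geodesic $r\mapsto \frac{r-1}{r+1}\tau$ remains at bounded $k_\Omega$-distance from $[r_0,+\infty)$: both curves are quasi-geodesics exiting $\Omega$ through the same prime end at $+\infty$ with comparable arclength rates, and finite shift prevents the Koenigs image from drifting tangentially along horocycles.

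The step (c) $\Rightarrow$ (b) is a direct computation. The quasi-geodesic property together with the half-plane asymptotics forced by \eqref{eq:intro_cond} gives $k_\Omega(r_0, r) = \tfrac{1}{2}\log r + O(1)$ as $r\to +\infty$; combining this with the bounded-distance hypothesis and unraveling the definition of $v^O_\phi, v^T_\phi$ as hyperbolic distances along and transverse to the $\tau$-diameter of $\mathbb{D}$ produces the bound in (b). For (b) $\Rightarrow$ (a), the bound $|v^T_\phi(t)-\tfrac{1}{2}\log t|\leq C$ together with $|v^O_\phi(t)-\tfrac{1}{2}\log t|\leq C$ yields $v^T_\phi(t)-v^O_\phi(t) = O(1)$, which via the standard identification of $v^T_\phi$ with the horocyclic index of $\phi_t(0)$ confines the orbit of $0$ to the complement of a fixed horocycle at $\tau$, i.e.\ finite shift.

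The main obstacle I anticipate is the bounded-distance statement in (c): we are comparing the analytically defined curve $h^{-1}([r_0,+\infty))$ with the geometrically defined radial geodesic $\{\rho\tau : 0<\rho<1\}$ in $\mathbb{D}$. These two curves arise from quite different principles, and the finite shift hypothesis controls only their tangential behavior along horocycles. Establishing uniform hyperbolic closeness likely requires a Gehring--Hayman-type distortion estimate for $h$ combined with the semi-conformality of $\Omega$ at $\infty$ to match the radial rates of approach to $\tau$, and this is where the bulk of the technical work should lie.
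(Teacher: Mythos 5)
Your architecture is sound at the two easy hinges --- in particular your \ref{t_sp}$\Rightarrow$\ref{t_sh} via $v^O_\phi(t)-v^T_\phi(t)=O(1)\iff \mathsf{Re}\,\psi_t(1)$ bounded is exactly the mechanism the paper uses --- but the step you dismiss as ``a direct computation'', namely \ref{t_hy}$\Rightarrow$\ref{t_sp}, is where the real gap sits. First, a local error: $k_\Omega(r_0,r)=\frac12\log r+O(1)$ does \emph{not} follow from the quasi-geodesic property plus \eqref{eq:intro_cond} (a quasi-geodesic only yields $\frac12\log r\le k_\Omega(r_0,r)\le A\cdot\frac12\log r+B$ with $A>1$); it follows from the bounded-distance hypothesis in \ref{t_hy} together with $k_\Omega(h(0),\gamma(r))=k_{\mathbb H}(1,r)$. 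Second, and more seriously, \ref{t_hy} is a statement about the points $r$ on the real half-line, while \ref{t_sp} is about the orbit points $h(0)+it$; transferring the information requires an argument you do not supply. The paper's route is \ref{t_hy}$\Rightarrow$\ref{t_sh} via Corollary~\ref{cor: shift_metric} (finiteness and positivity of the Julia--Wolff--Carath\'eodory dilation coefficient of $C_\tau^{-1}\circ h$ at $\tau$, restated as a finite non-tangential limit of $k_\Omega(h(0),h(w))-k_{\mathbb H}(1,h(w))$), and then \ref{t_sh}$\Rightarrow$\ref{t_sp} separately; there the lower bound $\liminf_t[v^T_\phi(t)-\frac12\log t]>-\infty$ is the single most technical point of the whole paper (horocycle reduction, the enlarged domain $\Omega^\sharp$, quasi-geodesics and a Jordan-curve argument). ``Unraveling the definitions'' does not produce it. If you want to close this yourself, note that finite shift gives $v^O_\phi=v^T_\phi+O(1)$, the triangle inequality gives $v^O_\phi(t)+v^T_\phi(t)\ge v_\phi(t)=k_\Omega(h(0),h(0)+it)\ge k_{\mathbb H}(h(0),h(0)+it)=\log t+O(1)$, and the upper bound $\limsup_t[v^T_\phi(t)-\frac12\log t]<+\infty$ is quoted from \cite{Cor}; but some such argument must appear.

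Two further points. In \ref{t_sh}$\Leftrightarrow$\ref{t_ser}, the sentence ``Karamanlis's integral can be rewritten, up to a bounded multiplicative perturbation, as $\int dx/y(x)$'' is the conclusion of the proof, not an observation: (i) Theorem~\ref{thm_karamanlis} concerns $\eta_\Omega$, whereas your $y$ is the width of $\Omega_\ast$, and the passage from $\Omega$ to $\Omega_\ast$ (Proposition~\ref{prop-ast}) rests on harmonic-measure estimates from \cite{Kar}, not on a formal rewriting; (ii) converting the angular deficit $\pi-\eta_{\Omega_\ast}(r)$ along circles $|z|=r$ into the reciprocal vertical width $1/b(j)$ is a genuine geometric computation, which the paper performs by sandwiching $\Omega_\ast$ between two step domains $\Sigma$ and $\Sigma'$ and integrating explicit $\arctan$ expressions, using Property~\ref{prty_cone} to ensure $b(j)/j\to\infty$. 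Your monotonicity remark only handles the final (easy) integral-to-series comparison. Finally, for \ref{t_sh}$\Rightarrow$\ref{t_hy} you correctly identify the synchronization of the two parametrizations as the main obstacle, but the Gehring--Hayman estimate you propose is replaced in the paper by something cleaner: the Shadowing Lemma (Proposition~\ref{prop:shad}) gives $k_\Omega(r,\gamma(s(r)))<M$ for the hyperbolic projection $\gamma(s(r))$, and Corollary~\ref{cor: shift_metric} forces $\left|\log\frac{s(r)}{r}\right|=O(1)$, which is exactly the synchronization you need.
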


Some remarks about the meaning of the previous characterizations are in order. First, \ref{t_sp} says that $(\phi_t)$ is of finite shift if and only if its speed of convergence to $\tau$ is exactly like that of a parabolic group of automorphisms of $\mathbb D$. In \cite{Cor}  the author proved that the tangential speed goes at most like $\frac{1}{2}\log t$, while the orthogonal speed might go faster (see also \cite{BCK}).

Characterization \ref{t_hy} reflects the fact that the hyperbolic geometry of $\Omega$ around the real half-line should be eventually similar (in the Gromov sense) to that of $\mathbb H$ in the case of finite shift.

Finally, characterization \ref{t_ser} gives a (Euclidean) description of the shape of $\Omega$. The domain $\Omega_\ast$ is the `starlike-fication' of $\Omega$ (as defined in \cite{BR}) with respect to the real axis, and we prove that the associated semigroup is of finite shift if and only if $(\phi_t)$ is. For $\Omega_\ast$, Karamanlis' integral condition can be simplified in terms of a series which estimates how fast the height of the boundary of $\Omega_\ast$ decreases to $-\infty$.

We give an example of application of the previous theorem:
\begin{example}
 Let us denote by $b(j)$ the term $\inf\{y>0:j-i y\notin \Omega_\ast\}$ appearing in the series of the point \ref{t_ser}. For  $\epsilon\geq 0$ consider the simply connected domain
	\[
	\Omega_{\epsilon}:=\{z=x+iy\in\mathbb{H}:y>-x|\log x|^{1+\epsilon}\},
	\]
 which can be seen as the Koenigs domain of a parabolic semigroup $(\phi_t^\epsilon)$ in $\mathbb{D}$ with positive hyperbolic step.
	For $\epsilon=0$,  taking  $r_0=n_0=2$,  it is clear that for any $j\geq 2$ we have $b(j)=j\log j$. Since the series $\sum_{j=2}^{\infty}\frac{1}{j\log j}$ diverges, $(\phi_t^0)$ is of infinite shift.
	
	On the other hand, for $\epsilon>0$, still  taking $r_0=n_0=2$, the series $\sum_{j=2}^{\infty}1/b(j)$ compares with the improper integral
	\[
	\int_2^{\infty}\frac{\mathrm{d}x}{x(\log x)^{1+\epsilon}}=\int_{\log 2}^\infty\frac{\mathrm{d}u}{u^{1+\epsilon}}<+\infty,
	\] so it is convergent. Thus for any $\epsilon>0$ the shift of $(\phi_t^\epsilon)$ is finite.
\end{example}

The proof of the Theorem~\ref{main} relies on a mix of different techniques from complex analysis, metric spaces (in particular Gromov's shadowing lemma), hyperbolic geometry and harmonic analysis.

After all the preliminary notions needed throughout the article are presented in Section \ref{sec:pre}, the shift of a semigroup is introduced in Section \ref{sec:shift} together with some additional results. In the last sections we give the proof of Theorem~\ref{main}. 

\section{Preliminaries}\label{sec:pre}

\subsection{Hyperbolic metrics and distances}

Let $\mathbb{D}$ denote the open unit disc in the complex plane. Here we can define a \emph{hyperbolic norm}
\[
\varkappa_\mathbb{D}(z;v):=\frac{|v|}{1-|z|^2},\quad z\in\mathbb{D},\,v\in\mathbb{C}\cong T_z\mathbb{D}.
\]

Then we can assign a length to each piecewise $C^1$ path $\gamma:[0,1]\to\mathbb{D}$
\[
\ell_\mathbb{D}(\gamma)=\int_0^1\varkappa_\mathbb{D}(\gamma(s),\gamma'(s))\,\mathrm{d}s
\]
and we have the integrated distance
\[
k_\mathbb{D}(z,w):=\inf_{\gamma\in\Gamma(z,w)}\ell_\mathbb{D}(\gamma),\quad\text{for all } z,w\in\mathbb{D},
\]
where
\[
\Gamma(z,w):=\{\gamma:[0,1]\to\mathbb{D},\gamma\in C^1_{\mathrm{pw}}([0,1]),\gamma(0)=z,\gamma(1)=w\},
\]
which is called the \emph{hyperbolic distance} in $\mathbb{D}$ (or also \emph{Poincar\'e distance}). The following is an explicit expression for such distance:
\[
k_\mathbb{D}(z,w)=\frac{1}{2}\log\frac{1+\left|\frac{w-z}{1-\overline{w}z}\right|}{1-\left|\frac{w-z}{1-\overline{w}z}\right|}
\]
(see \cite[Theorem 1.3.5]{BCDbook}). It is a consequence of the Schwarz-Pick Lemma that the distance $k_\mathbb D$ is invariant for automorphisms of the unit disc.
 
For any simply connected domain $\Omega\subsetneq\mathbb{C}$ there exists a conformal equivalence $f:\mathbb{D}\to\Omega$ by the well-known Riemann Mapping Theorem. We can define a hyperbolic distance on $\Omega$ as
\[
k_\Omega(z,w):=k_\mathbb{D}(f^{-1}(z),f^{-1}(w)).
\]
The definition does not depend on the choice of $f$. From this definition one deduces \emph{conformal invariance} of hyperbolic metrics: if $F$ is a biholomorphism from $\Omega$ to $\Omega_1$, which are simply connected domains $\subsetneq \mathbb C$, then $k_{\Omega}(z,w)=k_{\Omega_1}(F(z),F(w))$ for any $z,w\in\Omega$. Indeed, if we have a map $f$ from $\mathbb D$ onto $\Omega$ as above, then $F\circ f$ is a conformal equivalence $\mathbb D\cong \Omega_1$, so both terms coincide by definition with $k_\mathbb{D}(f^{-1}(z),f^{-1}(w))$. 

A \emph{geodesic} is a smooth curve along which the hyperbolic length between two points correspond to their (hyperbolic) distance. For the unit disc equipped with $k_\mathbb{D}$, geodesic lines are given by diameters and arcs of circles intersecting $\partial\mathbb{D}$ orthogonally. There exist a unique geodesic between two points in $\mathbb{D}$, and a unique geodesic ray starting from a point in the disc and converging to a fixed point in the boundary.

For any simply connected domain $\Omega\subsetneq\mathbb{C}$, since any Riemann map $f:\mathbb{D}\to\Omega$ is an isometry when we consider the hyperbolic distances, we still have uniqueness of geodesic between two points of $\Omega$. For dealing with the boundary, one can consider the \emph{Carath\'eodory boundary} of the prime ends $\partial_C\Omega$. Hence, there exists a unique geodesic ray arising from a point of $\Omega$ and converging to a fixed prime end in the Carath\'eodory topology (see  \cite[Chapter 4]{BCDbook}).
\subsection{Quasi-geodesics}
A \emph{quasi-geodesic} in a simply connected domain $\Omega$ for its hyperbolic metric $k_\Omega$ is a Lipschitz curve $\sigma:[0,+\infty)\to\Omega$ such that $k_\Omega(\sigma(0),\sigma(t))\to+\infty$ for $t\to+\infty$ and there exist $A\geq 1$ and $B\geq 0$ for which
\[
\ell_\Omega(\sigma;s,t)\le Ak_\Omega(\sigma(s),\sigma(t))+B\quad\text{ for all }0\le s<t<+\infty.
\]
Here $\ell_\Omega(\sigma;s,t)$ denotes the length of the restricted curve $\sigma\mid_{[s,t]}$ with respect to the hyperbolic metric $k_\Omega$.
A quasi-geodesic satisfying this property for a fixed pair $(A,B)$ is called a \emph{$(A,B)$-quasi-geodesic}.

In general, it is hard to construct geodesic lines for the hypebolic metric of an arbitrary simply connected domain, as we may not have a complete description of the Riemann map and we cannot compute the metric exactly. Nonetheless, we may use some estimates on the metric in order to decide whether a given curve is a quasi-geodesic or not. The following results asserts that -- in some sense -- quasi-geodesics are close to geodesics:
\begin{proposition}[Shadowing Lemma]\label{prop:shad}
	Let $\Omega\subsetneq\mathbb{C}$ be a simply connected domain which is starlike at infinity. Let $A\geq 1$, $B\geq 0$. Then there exists $M=M(A,B,\Omega)>0$ such that for any $(A,B)$-quasi-geodesic $\sigma:[0,+\infty)\to\Omega$ there exist a $k_\Omega$-geodesic $\eta:[0,+\infty)\to\Omega$ such that $\sigma(0)=\eta(0)$ and for any $t\ge 0$ it is
	\[
	\inf_{s\geq 0} k_\Omega(\sigma(t),\eta(s))<M,\quad\inf_{s\ge0} k_\Omega(\eta(t),\sigma(s))<M.
	\]
	Furthermore, $\sigma$ and $\eta$ converge to the same prime end in the Carath\'eodory boundary $\partial_C\Omega$.
\end{proposition}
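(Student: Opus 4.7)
The plan is to transport everything to $\mathbb{D}$ via a Riemann map and then invoke the classical Morse stability lemma for Gromov hyperbolic spaces. Pick a Riemann map $f:\mathbb{D}\to\Omega$ with $f(0)=\sigma(0)$. By conformal invariance of the hyperbolic metric, $\tilde\sigma:=f^{-1}\circ\sigma:[0,+\infty)\to\mathbb{D}$ is an $(A,B)$-quasi-geodesic for $k_\mathbb{D}$ starting at the origin, and producing the required $\eta$ is equivalent to producing a $k_\mathbb{D}$-geodesic ray $\tilde\eta$ from $0$ that shadows $\tilde\sigma$ uniformly.

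Next I would use that $(\mathbb{D},k_\mathbb{D})$ is $\delta$-hyperbolic in the Gromov sense for a universal $\delta$. In any such space the Morse lemma supplies a constant $R=R(\delta,A,B)$ so that any $(A,B)$-quasi-geodesic and any actual geodesic between the same pair of endpoints lie within mutual Hausdorff distance $R$. To upgrade this to a statement about geodesic \emph{rays}, I first note that the quasi-geodesic condition $k_\mathbb{D}(0,\tilde\sigma(t))\to+\infty$ forces $|\tilde\sigma(t)|\to 1$; a standard argument in $\delta$-hyperbolic spaces (shadowing applied to segments $\tilde\sigma|_{[s,t]}$ as $s,t\to+\infty$) then shows that $\tilde\sigma(t)$ is Euclidean Cauchy and hence converges to some $\xi\in\partial\mathbb{D}$. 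Define $\tilde\eta$ as the unique $k_\mathbb{D}$-geodesic ray from $0$ to $\xi$. For each $T>0$, apply the Morse lemma to $\tilde\sigma|_{[0,T]}$ paired with the geodesic segment from $0$ to $\tilde\sigma(T)$; as $T\to+\infty$ these segments converge locally uniformly to $\tilde\eta$, and passing to the limit produces both required inequalities with a uniform constant $M$.

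Finally, conformal invariance transports $\tilde\eta$ to the $k_\Omega$-geodesic ray $\eta:=f\circ\tilde\eta$, with $\eta(0)=\sigma(0)$ and the stated shadowing estimates. To match the Carath\'eodory formulation, I would invoke Carath\'eodory's prime end theorem to identify the common limit $\xi\in\partial\mathbb{D}$ of $\tilde\sigma$ and $\tilde\eta$ with a prime end of $\Omega$; the starlike-at-infinity hypothesis guarantees that the prime end structure at the top of $\Omega$ is tame and that both $\sigma$ and $\eta$ converge to the same element of $\partial_C\Omega$. The main obstacle I expect is exactly this passage from the finite-segment Morse lemma to the ray version together with the simultaneous identification of the common Carath\'eodory limit: both steps are standard in Gromov-hyperbolic geometry but need some care to phrase cleanly in the conformal setting, especially since the Riemann map $f$ has only prime end boundary behavior and not classical continuous extension.
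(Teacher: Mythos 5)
The paper does not actually prove this proposition but defers to \cite[Theorem 6.3.8]{BCDbook}, whose argument runs along exactly the lines you describe --- conformal transport to $(\mathbb D,k_{\mathbb D})$, Gromov hyperbolicity, geodesic stability for quasi-geodesic segments, passage to rays via convergence of $\tilde\sigma(t)$ to a boundary point, and the prime-end correspondence --- so your outline is correct and essentially the same proof. The only points worth tightening are that the paper's length-based definition of quasi-geodesic must first be converted (e.g.\ by hyperbolic arc-length reparametrization, which does not change the image) into the quasi-isometric-embedding form assumed by the classical Morse lemma, and that the starlike-at-infinity hypothesis plays no role in the final step: the identification of $\partial\mathbb D$ with $\partial_C\Omega$ via the Riemann map holds for every simply connected domain.
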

For a proof, see \cite[Theorem 6.3.8]{BCDbook}.

\subsection{Holomorphic semigroups in the unit disc}
A \emph{continuous semigroup of holomorphic self-maps in the unit disc} $(\phi_t)_{t\ge 0}$, or shortly a \emph{semigroup in $\mathbb{D}$}, is given by a family of holomorphic functions $\phi_t\in\mathsf{Hol}(\mathbb{D},\mathbb{D})$ such that the map $t\mapsto\phi_t$ from $\mathbb{R}^+:=[0,+\infty)$ to $\mathsf{Hol}(\mathbb{D},\mathbb{D})$:
\begin{enumerate}[label=(\roman*)]
	\item  it is a semigroup homomorphism from $(\mathbb{R}^+,+)$ to $(\mathsf{Hol}(\mathbb{D},\mathbb{D}),\circ)$;
	\item it is continuous by taking the Euclidean topology in $\mathbb{R}^+$ and the topology of uniform convergence on compact subsets in $\mathsf{Hol}(\mathbb{D},\mathbb{D})$.
\end{enumerate}

A semigroup as above is \emph{non-elliptic} when the maps $\phi_t$ have no fixed point in the unit disc: if so, there exists a unique point $\tau\in\partial\mathbb{D}$, called the \emph{Denjoy-Wolff point} of the semigroup, for which $\lim\limits_{t\to+\infty}\phi_t(z)=\tau$ for any $z\in\mathbb{D}$ and the convergence is uniform on any compact subset $K\subset\mathbb{D}$. By Denjoy-Wolff Theorem \cite[Theorem 1.8.4]{BCDbook} one has that for all $t>0$ and $R>0$
\[
\phi_t(\mathcal{E}(\tau,R))\subset\mathcal{E}(\tau,R):=\{z\in\mathbb{D}:|\tau-z|^2<R(1-|z|^2)\}
\]
The set $\mathcal{E}(\tau,R)$ defined above is called a \emph{horocycle} of (hyperbolic) radius $R$ and center $\tau$. In Euclidean terms it is the open disc contained inside $\mathbb{D}$ of radius $R/(R+1)$ and tangent at $\tau$ to $\partial\mathbb{D}$.

For any $t\geq 0$, the angular derivative at the Denjoy-Wolff point exists and moreover it is $\phi_t'(\tau)=e^{-\lambda t}$, where $\lambda\geq 0$: see \cite[Theorem 8.3.1]{BCDbook}. We say that a non-elliptic semigroup in $\mathbb{D}$ $(\phi_t)$ is \emph{parabolic} when $\lambda=0$, otherwise it is \emph{hyperbolic} of \emph{spectral value} $\lambda>0$.

The \emph{$1$-hyperbolic step} is  $s_1((\phi_t),z):=\lim\limits_{t\to+\infty}k_\mathbb{D}(\phi_t(z),\phi_{t+1}(z))$. For a parabolic semigroup, it can be positive for all $z\in\mathbb{D}$, otherwise it must be zero for all $z\in\mathbb{D}$. In the first case the semigroup is said to have \emph{positive hyperbolic step}, in the second one it has \emph{zero hyperbolic step}.

\subsection{Koenigs maps and domains}
For any non-elliptic semigroup $(\phi_t)$ in the unit disc one can associate a holomorphic map $h:\mathbb{D}\to\Omega\subset\mathbb{C}$ such that $\Omega$ is a simply connected domain which is \emph{starlike at infinity} in the positive direction of the imaginary axis, i.e. $\Omega+it\subset\Omega$ for all $t\geq 0$, and the relation $h(\phi_t(z))=h(z)+it$ holds for all $t\geq 0$ and $z\in\mathbb{D}$. Let $\Omega':=\bigcup\limits_{t\geq 0}(\Omega-it)$. Up to translations, it can be chosen in a unique way so that:
\begin{enumerate}[label=(\alph*)]
	\item $\Omega'$ coincides with the strip $\mathbb{S}_{0,\pi/\lambda}:=\{w\in\mathbb{C}: 0<\mathsf{Re}\,w<\pi/\lambda\}$ if $(\phi_t)$ is hyperbolic of spectral value $\lambda>0$.
	\item $\Omega'=\mathbb{H}:=\{w\in\mathbb{C}:\mathsf{Re}\,w>0\}$ or $\Omega'=-\mathbb{H}$ if $(\phi_t)$ is parabolic of positive hyperbolic step;
	\item $\Omega'=\mathbb{C}$ if $(\phi_t)$ is parabolic of zero hyperbolic step.
\end{enumerate}
This is proved, for instance, in \cite[Theorem 9.3.5]{BCDbook} and -- for the uniqueness part -- \cite[Proposition 9.3.10]{BCDbook}.
The function $h$ will be called the \emph{Koenigs map} associated to $(\phi_t)$. The set $\Omega=h(\mathbb{D})$ is called the \emph{Koenigs domain} associated to the semigroup. The uniqueness of the Koenigs map $h$ is meant up to translations.

Note that any simply connected domain $D$ in $\mathbb{C}$ with the starlike at infinity property as above can be seen as a Koenigs domain of a semigroup: just take a Riemann map $f$ from the unit disc to the domain $D$ and define\[
\phi_t^D(z):=f^{-1}(f(z)+it),\quad \text{for all }z\in\mathbb{D},\,t\geq 0.
\]

\subsection{Non-tangential convergence} For $R>1$ and $\tau\in\partial\mathbb{D}$, a \emph{Stolz region} of amplitude $R$ and of vertex $\tau$ is the set
\[
\mathcal{S}(\tau,R):=\{z\in\mathbb{D}:|\tau-z|<R(1-|z|)\}.
\]
We say that a sequence of points in $\mathbb{D}$ $z_n\to\tau$ converges to $\tau$ \emph{non-tangentially} if the $z_n$ are eventually contained in a Stolz region of vertex $\tau$. 
\begin{remark}[Notation]
We use the symbol $\angle\lim\limits_{z\to\tau}f(z)$ to denote the non-tangential limit of a function $f$ defined on $\mathbb{D}$. When it exists, $\angle\lim\limits_{z\to\tau}f(z)$ is the common limit of $f(z_n)$ for any sequence of points converging to $\tau$ non-tangentially.
\end{remark}
In the same fashion, we say that a curve $\gamma:[0,+\infty)\to\mathbb{D}$ converges to $\tau$ non-tangentially if $\gamma(t)\to\tau$ as $t\to+\infty$ and for $t$ enough $\gamma(t)$ belongs to $\mathcal{S}(\tau,R)$ for some $R>1$. This is equivalent to say that the cluster set of $\arg (1-\overline{\tau}\phi_t(z))$ for $t\to+\infty$ is contained in $(-\pi/2,\pi/2)$. On the other end, we say that the convergence is \emph{tangential} when this cluster set is equal to ${-\pi/2}$ or ${\pi/2}$.

In \cite{BCDGZ} it is proved the following result linking the convergence behavior of the orbits of a non-elliptic semigroup with the Euclidean shape of its Koenigs domain. Let $\Omega\subset \mathbb C$ be a domain starlike at infinity and let $p\in\Omega$. Let 
	\begin{align*}
		\begin{aligned}
			\widetilde{\delta^+_p}(t)&:=\inf\,\{|\zeta-p-it|:\zeta\in\partial\Omega,\mathsf{Re}\,\zeta\geq\mathsf{Re}\,p\},\\
			\widetilde{\delta^-_p}(t)&:=\inf\,\{|\zeta-p-it|:\zeta\in\partial\Omega,\mathsf{Re}\,\zeta\le\mathsf{Re}\,p\}.
		\end{aligned}
	\end{align*}
Note that $\widetilde{\delta^+_p}(t)$ is the distance of $p+it$ from the boundary of $\Omega$ which stays ``on the right'' of the line $\mathsf{Re}\,\zeta=\mathsf{Re}\,p$, and, similarly, $\widetilde{\delta^-_p}(t)$ is the distance ``on the left''. We set
\[
	\delta_p^+(t)=\min\{\widetilde{\delta^+_p}(t),t\},\quad	\delta_p^-(t)=\min\{\widetilde{\delta^-_p}(t),t\}.
	\]

\begin{theorem}[{\cite[Theorem 1.2 and 1.3]{BCDGZ}}]\label{thm:quasi-sym-nontg}
	Let $(\phi_t)$ be a non-elliptic semigroup in $\mathbb{D}$ with Denjoy-Wolff point $\tau\in\partial\mathbb{D}$. Let $h:\mathbb{D}\to \mathbb{C}$ be its Koenigs map and $\Omega=h(\mathbb{D})$ its Koenigs domain. 		
	\begin{enumerate}[label=(\alph*)]
		\item For some (and hence any) $z\in\mathbb{D}$ the orbit $t\mapsto\phi_t(z)$ converges \emph{non-tangentially} to $\tau$ if and only if for some (and hence any) $p\in\Omega$ there is a pair of constants $0<c<C$ such that for all $t\ge 0$ \[
		c\delta^+_p(t)\le\delta^-_p(t)\le C\delta^+_p(t).
		\]
		\item For some (and hence any) $z\in\mathbb{D}$ the orbit $t\mapsto\phi_t(z)$ converges \emph{tangentially} to $\tau$  if and only if one of the following holds:
		\begin{itemize}
			\item $\lim_{t\to+\infty}\delta^+_p(t)/\delta^-_p(t)=+\infty$ for some (and hence any) $p\in\Omega$;
			\item $\lim_{t\to+\infty}\delta^+_p(t)/\delta^-_p(t)=0$ for some (and hence any) $p\in\Omega$.
		\end{itemize}
		In the first case, the slope at any point in $\mathbb{D}$ is $\{-\pi/2\}$, while in the second one we have slope $\{\pi/2\}$ at all points of the unit disc.
	\end{enumerate}
\end{theorem}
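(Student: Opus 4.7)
My plan is to pass to the Koenigs model and reduce the non-tangential/tangential dichotomy to a purely Euclidean statement about the symmetry of $\Omega$ around the vertical ray $\{p+it:t\geq 0\}$. Fix a Cayley transform $H:\mathbb{D}\to\mathbb{H}$ sending $\tau$ to $\infty$, and set $\tilde h:=h\circ H^{-1}:\mathbb{H}\to\Omega$. Then non-tangential convergence of the orbit $\phi_t(z)$ to $\tau$ is, by definition, equivalent to $|\mathsf{Re}\,H(\phi_t(z))|/\mathsf{Im}\,H(\phi_t(z))$ staying bounded in $t$, while tangential convergence means this ratio tends to $+\infty$, with the sign of $\mathsf{Re}\,H(\phi_t(z))$ determining whether the slope is $\pi/2$ or $-\pi/2$. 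Since $H(\phi_t(z))=\tilde h^{-1}(p+it)$, the question is geometric on $\Omega$ as soon as we translate this ratio into intrinsic data of $\Omega$.

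The core step is the following hyperbolic-to-Euclidean comparison. Let $\gamma:=\tilde h(\{iy:y>0\})$, which is a $k_\Omega$-geodesic ray converging to $\infty$ inside $\Omega$ (by conformal invariance of geodesics). Then $k_\mathbb{H}(H(\phi_t(z)),i\,\mathsf{Im}\,H(\phi_t(z)))=k_\Omega(p+it,\gamma)$, and boundedness of the former is equivalent to boundedness of $|\mathsf{Re}\,H(\phi_t(z))|/\mathsf{Im}\,H(\phi_t(z))$. Using the Koebe-one-quarter estimate for $\tilde h$ and the geometry of horizontal slices of $\Omega$ at height $\mathsf{Im}\,p+t$, one shows that $k_\Omega(p+it,\gamma)$ differs from $|\log(\delta^+_p(t)/\delta^-_p(t))|$ by a quantity bounded uniformly in $t$. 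The truncation by $t$ in the definition of $\delta^\pm_p$ arises naturally: Koebe bounds the conformal scale of $\tilde h$ at $p+it$ by a constant times $t$, so boundary pieces farther than $t$ from $p+it$ cannot contribute to the hyperbolic geometry at that point.

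Part (a) then follows at once: $c\delta^+_p(t)\leq\delta^-_p(t)\leq C\delta^+_p(t)$ is equivalent to boundedness of $|\log(\delta^+_p(t)/\delta^-_p(t))|$, hence to non-tangential convergence. For part (b), tangential convergence is exactly the complement of non-tangential, so either $\delta^+_p/\delta^-_p$ or its reciprocal is unbounded. To upgrade cluster values to genuine limits $+\infty$ (resp.\ $0$) and to identify the corresponding slope as $-\pi/2$ (resp.\ $+\pi/2$), I would use that $t\mapsto p+it$ is a $k_\Omega$-quasi-geodesic (by starlikeness of $\Omega$ at infinity), so its $\tilde h^{-1}$-image is a $k_\mathbb{H}$-quasi-geodesic in $\mathbb{H}$ converging to $\infty$. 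The Shadowing Lemma (Proposition \ref{prop:shad}) forces this quasi-geodesic to asymptotically follow a genuine geodesic ray in $\mathbb{H}$, which eventually lies in only one of the Euclidean half-planes $\{\mathsf{Re}>0\}$ or $\{\mathsf{Re}<0\}$; this pins down the sign of the limit and hence the slope.

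The main obstacle is making the hyperbolic-Euclidean comparison $k_\Omega(p+it,\gamma)\sim|\log(\delta^+_p(t)/\delta^-_p(t))|$ precise and uniform in $t$. This requires controlling the module of topological quadrilaterals in $\Omega$ spanned by horizontal cross-cuts at heights $\mathsf{Im}\,p$ and $\mathsf{Im}\,p+t$, with constants independent of both $t$ and the shape of $\Omega$. Starlikeness at infinity is crucial, ensuring that the relevant horizontal cross-cuts exist and behave well. Once this module/extremal-length estimate is in place, the remainder of the argument is essentially organizational.
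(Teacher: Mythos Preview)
The paper does not prove this theorem at all: it is quoted verbatim from \cite{BCDGZ} (Theorems~1.2 and~1.3 there) and used as a black box in the preliminaries. There is therefore no proof in the present paper to compare your proposal against.

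That said, your outline is in the right spirit---the original proof in \cite{BCDGZ} does pass to the Koenigs model and does use that $t\mapsto p+it$ is a quasi-geodesic together with the Shadowing Lemma---but what you have written is not yet a proof. Two concrete issues. First, there is orientation confusion: in this paper $\mathbb{H}=\{\mathsf{Re}\,z>0\}$, so the relevant geodesic in $\mathbb{H}$ is the positive real axis, not the imaginary axis; your identity $k_\mathbb{H}(H(\phi_t(z)),\,i\,\mathsf{Im}\,H(\phi_t(z)))=k_\Omega(p+it,\gamma)$ is therefore stated for the wrong axis, and the non-tangentiality criterion should be boundedness of $|\mathsf{Im}|/\mathsf{Re}$, not $|\mathsf{Re}|/\mathsf{Im}$. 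Second, and more seriously, the entire content of the theorem is hidden in the sentence ``one shows that $k_\Omega(p+it,\gamma)$ differs from $|\log(\delta^+_p(t)/\delta^-_p(t))|$ by a bounded quantity''; you identify this as the main obstacle and then simply assume it. This is the heart of \cite{BCDGZ}: it requires building explicit quasi-geodesics (``good curves'') in $\Omega$ whose real part is controlled by $\delta^\pm_p$, and the Koebe-type reasoning you sketch is not sufficient to produce these curves.

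Finally, your claim in part~(b) that ``tangential convergence is exactly the complement of non-tangential'' is false for general curves in $\mathbb{D}$: there are orbits with oscillating slope that are neither tangential nor non-tangential, and ruling this out (so that the ratio $\delta^+_p/\delta^-_p$ genuinely tends to $0$ or $+\infty$ rather than merely being unbounded) is part of what the Shadowing Lemma is needed for, not a trivial dichotomy.
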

In particular, hyperbolic semigroups have always orbits converging to the Denjoy-Wolff point non-tangentially.

\subsection{Speeds of convergence}
We conclude the preliminaries by introducing the \emph{speeds of convergence} of a (non-elliptic) semigroup. For more details we refer to the paper \cite{Br} and to Chapter 16 in \cite{BCDbook}.

For any semigroup $(\phi_t)$ in $\mathbb{D}$ with Denjoy-Wolff $\tau\in\partial\mathbb{D}$, the \emph{total speed} is the function of $t\geq 0$ given by the hyperbolic distance from the origin of $\phi_t(0)$:
\[
v_\phi(t):=k_\mathbb{D}(0,\phi_t(0)).
\]

If we denote by $\xi_t$ the unique point along the diameter from $-\tau$ to $\tau$ (a $k_\mathbb{D}$-geodesic line) which minimizes the hyperbolic distance from $\phi_t(0)$, then the \emph{orthogonal speed} of the semigroup at time $t\geq 0$ is
\[
v_\phi^O(t):=k_\mathbb{D}(0,\xi_t),
\]
while the \emph{tangential speed} is given by the distance
\[
v_\phi^T(t):=k_\mathbb{D}(\phi_t(0),\xi_t).
\]

An important fact is the following `hyperbolic Pythagoras' Theorem'.
\begin{proposition}[{\cite[Proposition 3.4]{Br}}]\label{prop:pyt}
Let $(\phi_t)$ be a non-elliptic semigroup in $\mathbb{D}$. Then for all $t\geq 0$ it is
\[
v_\phi^O(t)+v_\phi^T(t)-\frac{1}{2}\log 2\leq v_\phi(t)\leq v_\phi^O(t)+v_\phi^T(t).
\]
\end{proposition}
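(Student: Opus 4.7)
The upper bound $v_\phi(t)\le v_\phi^O(t)+v_\phi^T(t)$ is merely the triangle inequality for $k_\mathbb D$ applied to the three points $0$, $\xi_t$ and $\phi_t(0)$, so the genuine content of the statement is the lower bound. My plan is to reduce to the classical hyperbolic Pythagorean identity for a right triangle (adjusted for the paper's normalisation of $k_\mathbb D$) and then extract the linear estimate via elementary bounds on $\cosh$.

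By conformal invariance of $k_\mathbb D$, I transport the configuration $(0,\xi_t,\phi_t(0))$ to the upper half-plane $U:=\{w\in\mathbb C:\mathrm{Im}\,w>0\}$ via a Möbius map that sends $\tau$ to $\infty$ and the diameter $(-\tau,\tau)$ to the positive imaginary axis; a further hyperbolic dilation places the image of $0$ at $i$. The image of $\phi_t(0)$ is then some $P=re^{i\theta}$ with $r>0$ and $\theta\in(0,\pi)$, and since $\xi_t$ is the foot of the hyperbolic perpendicular from $\phi_t(0)$ to the imaginary axis its image is $ir$. The paper's density $\varkappa_\mathbb D(z,v)=|v|/(1-|z|^2)$ pulls back to $|dw|/(2\,\mathrm{Im}\,w)$ on $U$, for which the distance formula reads $\cosh(2k(z,w))=1+|z-w|^2/(2\,\mathrm{Im}(z)\,\mathrm{Im}(w))$. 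A direct computation then yields
\[
\cosh(2a)=\frac{r^2+1}{2r},\qquad \cosh(2b)=\frac{1}{\sin\theta},\qquad \cosh(2c)=\frac{r^2+1}{2r\sin\theta},
\]
where $a:=v_\phi^O(t)$, $b:=v_\phi^T(t)$, $c:=v_\phi(t)$. Multiplying the first two identities gives the hyperbolic Pythagorean identity
\[
\cosh(2c)=\cosh(2a)\cosh(2b).
\]

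To deduce the lower bound from this identity I apply the product-to-sum formula:
\[
2\cosh(2a)\cosh(2b)=\cosh(2(a+b))+\cosh(2(a-b))\ge \cosh(2(a+b))+1\ge \tfrac12 e^{2(a+b)}+1,
\]
using $\cosh\ge 1$ and $\cosh x\ge e^x/2$ for $x\ge 0$. Writing $u:=e^{2c}\ge 1$, the Pythagorean identity rewrites as $u+1/u\ge \tfrac12 e^{2(a+b)}+1$; since $1/u\le 1$ one concludes $u\ge \tfrac12 e^{2(a+b)}$, equivalently $c\ge a+b-\tfrac12\log 2$. The only delicate point in the argument is keeping track of the factor $2$ in the argument of $\cosh$ coming from the paper's normalisation of the hyperbolic density; this factor is exactly what produces the $\tfrac12$ in front of $\log 2$, and the constant is sharp, as one sees by letting $a=b\to+\infty$.
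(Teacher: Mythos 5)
Your proof is correct. The paper does not actually prove this proposition --- it is quoted verbatim from \cite[Proposition 3.4]{Br} --- so there is no internal argument to compare against; your self-contained derivation, reducing to the hyperbolic Pythagorean identity $\cosh(2c)=\cosh(2a)\cosh(2b)$ in the half-plane model (with the factor $2$ correctly tracking the paper's normalisation $\varkappa_\mathbb{D}(z;v)=|v|/(1-|z|^2)$, which halves the usual distance) and then extracting the additive estimate via the product-to-sum formula, is the standard route, all computations check out, and your remark that $a=b\to+\infty$ shows sharpness of the constant $\tfrac12\log 2$ is also correct.
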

Moreover one can show that the tangential speed is bounded from above by the orthogonal speed up to a constant: $v^T_\phi(t)\leq v^O_\phi(t)+4\log 2$ (see \cite[Proposition 5.4]{Br} for a proof).

These quantities can be used to deduce some Euclidean properties of the semigroup: see \cite[Proposition 3.8]{Br}. Here we just recall the fact that the boundedness from above of the tangential speed corresponds to non-tangential convergence of the orbits of the semigroup to the Denjoy-Wolff point. In \cite{Cor} it is established the following asymptotic upper bound for the tangential speed:
\[
\limsup_{t\to+\infty}\left[v^T_\phi(t)-\frac{1}{2}\log t\right]<+\infty.
\]

\section{The shift of a non-elliptic semigroup}\label{sec:shift}

\begin{definition}
	
Let $(\phi_t)$ be a semigroup of holomorphic self-maps in $\mathbb{D}$ which is non-elliptic with Denjoy-Wolff point $\tau\in\partial\mathbb{D}$. It is of \emph{finite shift} if there exists $R>0$ and $z\in\mathbb{D}$ such that $\phi_t(z)\notin \mathscr{E}(\tau,R)$ for all $t\geq 0$. Otherwise we say that $(\phi_t)$ is of \emph{infinite shift}.

\end{definition}

If we translate the definition above into the right half-plane $\mathbb{H}$ via the Cayley transform $C_\tau:z\mapsto (\tau+z)/(\tau-z)$, we easily get that $(\phi_t)$ has finite shift if and only if there exist $R>0$ and $w\in\mathbb{H}$ such that $\mathsf{Re}\,\psi_t(w)<R$ for all $t\geq 0$, where $\psi_t=C_\tau\circ\phi_t\circ C_\tau^{-1}$.

The next Proposition shows that being of finite shift is a property which is independent of the choice of the initial point.

\begin{proposition}[{\cite[Lemma 17.7.4]{BCDbook}}]
	Let $(\phi_t)$ be a non-elliptic semigroup in $\mathbb{D}$ with Denjoy-Wolff point $\tau\in\partial\mathbb{D}$ and suppose it is of finite shift. Then for every $z\in\mathbb{D}$ there exists $R(z)>0$ so that for all $t\geq 0$ it is $\phi_t(z)\notin \mathscr{E}(\tau,R(z))$.
\end{proposition}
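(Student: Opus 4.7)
The plan is to pass to the half-plane via the Cayley transform $C_\tau(z)=(\tau+z)/(\tau-z)$ so that the Denjoy--Wolff point is at $\infty$, and then exploit the contraction of the hyperbolic metric under holomorphic self-maps of $\mathbb{H}$. Recall that $C_\tau$ maps the horocycle $\mathscr{E}(\tau,R)$ onto the half-plane $\{w\in\mathbb{H}:\mathsf{Re}\,w>1/R\}$. Consequently, finite shift of $(\phi_t)$ is equivalent to the existence of $w_0\in\mathbb{H}$ and a constant $K>0$ such that
\[
\mathsf{Re}\,\psi_t(w_0)\leq K\quad\text{for all }t\geq 0,
\]
where $\psi_t:=C_\tau\circ\phi_t\circ C_\tau^{-1}$ is a semigroup of holomorphic self-maps of $\mathbb{H}$. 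The goal is then to show that $\sup_{t\geq 0}\mathsf{Re}\,\psi_t(w)<+\infty$ for every $w\in\mathbb{H}$.

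Fix an arbitrary $w\in\mathbb{H}$ and set $M:=k_\mathbb{H}(w,w_0)$. Since each $\psi_t$ is a holomorphic self-map of $\mathbb{H}$, the Schwarz--Pick lemma yields
\[
k_\mathbb{H}(\psi_t(w),\psi_t(w_0))\leq M\quad\text{for all }t\geq 0.
\]
The key estimate is the elementary lower bound
\[
k_\mathbb{H}(w_1,w_2)\geq \frac{1}{2}\left|\log\frac{\mathsf{Re}\,w_1}{\mathsf{Re}\,w_2}\right|,\qquad w_1,w_2\in\mathbb{H},
\]
which follows from the explicit formula $\cosh k_\mathbb{H}(w_1,w_2)=|w_1+\overline{w_2}|/(2\sqrt{\mathsf{Re}\,w_1\,\mathsf{Re}\,w_2})$ together with the AM-GM inequality $|w_1+\overline{w_2}|\geq \mathsf{Re}\,w_1+\mathsf{Re}\,w_2$. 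Plugging the Schwarz--Pick bound into this estimate and exponentiating gives
\[
\mathsf{Re}\,\psi_t(w)\leq e^{2M}\,\mathsf{Re}\,\psi_t(w_0)\leq e^{2M}K
\]
for every $t\geq 0$.

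To conclude, I would set $R(z):=1/(e^{2M}K)$ where $w=C_\tau(z)$ and $M=k_\mathbb{D}(z,z_0)$, and transport the inequality back to $\mathbb{D}$ via $C_\tau^{-1}$; the image of the half-plane $\{\mathsf{Re}\,w\leq e^{2M}K\}$ is exactly the complement of $\mathscr{E}(\tau,R(z))$, so $\phi_t(z)\notin \mathscr{E}(\tau,R(z))$ for all $t\geq 0$. Conceptually, the proof encapsulates the intuition that horocycles, viewed from the hyperbolic perspective, have a uniform "multiplicative thickness" with respect to hyperbolic neighbourhoods, and this is preserved by the contracting maps $\phi_t$. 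I don't anticipate any real obstacle here; the only technical point is the lower bound on $k_\mathbb{H}$ in terms of the ratio of real parts, which is a one-line consequence of the explicit formula for the Poincar\'e distance in $\mathbb{H}$.
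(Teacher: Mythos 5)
Your proof is correct; the paper itself states this proposition without proof, citing \cite[Lemma 17.7.4]{BCDbook}, and the argument there is essentially the one you give, namely a Schwarz--Pick/Julia-type estimate showing that the ``horocycle coordinate'' at two points differs by at most the factor $e^{2k(\cdot,\cdot)}$ (carried out in the disc via $|\tau-z|^2/(1-|z|^2)$ rather than via $\mathsf{Re}\,w$ in $\mathbb{H}$, which is the same quantity after the Cayley transform). The only cosmetic slip is labelling $|w_1+\overline{w_2}|\geq \mathsf{Re}\,w_1+\mathsf{Re}\,w_2$ as AM--GM; that step is just modulus versus real part, with AM--GM entering afterwards as $\mathsf{Re}\,w_1+\mathsf{Re}\,w_2\geq 2\sqrt{\mathsf{Re}\,w_1\,\mathsf{Re}\,w_2}$.
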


If the orbits of a semigroup $(\phi_t)$ converge non-tangentially to the Denjoy-Wolff point $\tau$, the shift is infinite since a Stolz region with vertex $\tau$ is eventually contained in any horocycle centered in $\tau$. Therefore all hyperbolic semigroups are of infinite shift. It remains to study the case of \emph{parabolic} semigroups.

\begin{proposition}[{\cite[Proposition 17.7.3]{BCDbook}}]
	Let $(\phi_t)$ be a parabolic semigroup in $\mathbb{D}$ with Denjoy-Wolff point $\tau\in\partial\mathbb{D}$ and zero hyperbolic step. Then $(\phi_t)$ is of infinite shift.
\end{proposition}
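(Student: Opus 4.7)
The plan is to argue by contradiction via the Cayley transform $C_\tau\colon\mathbb{D}\to\mathbb{H}=\{\mathsf{Re}\,w>0\}$, which sends $\tau$ to $\infty$ and the horocycle $\mathscr{E}(\tau,R)$ to the right half-plane $\{\mathsf{Re}\,w>1/R\}$. Conjugating, I obtain a parabolic semigroup $(\psi_t):=(C_\tau\circ\phi_t\circ C_\tau^{-1})$ on $\mathbb{H}$ with Denjoy-Wolff point $\infty$, and the finite-shift hypothesis becomes: there exist $w_0\in\mathbb{H}$ and $M>0$ such that $\mathsf{Re}\,\psi_t(w_0)\le M$ for every $t\ge 0$. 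Since each $\psi_t$ preserves the horocycles $\{\mathsf{Re}\,w>c\}$ at $\infty$ by the Denjoy-Wolff Theorem, the function $t\mapsto \mathsf{Re}\,\psi_t(w_0)$ is non-decreasing, so under the contradiction hypothesis it converges to some $L\in(0,M]$; as $\psi_t(w_0)\to\infty$, one then has $|\mathsf{Im}\,\psi_t(w_0)|\to+\infty$, and the orbit approaches $\infty$ tangentially along the vertical line $\{\mathsf{Re}\,w=L\}$.

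Next, I would pass to the Koenigs model $\tilde h:=h\circ C_\tau^{-1}\colon\mathbb{H}\to\Omega$, so that $\psi_t(w_0)=\tilde h^{-1}(p_0+it)$ with $p_0=\tilde h(w_0)$ and the infinitesimal generator of $(\psi_t)$ is $G(w)=i/\tilde h'(w)$. The hypothesis $\Omega'=\mathbb{C}$, combined with the upper semicontinuity of the lower boundary profile of $\Omega$, forces $\mathrm{dist}(p_0+it,\partial\Omega)\to+\infty$, so by the Koebe $1/4$-theorem $\kappa_\Omega(p_0+it)\to 0$. Using the bound $\kappa_\mathbb{H}(\psi_t(w_0))=1/(2\mathsf{Re}\,\psi_t(w_0))\ge 1/(2M)$ and the conformal invariance $\kappa_\Omega(\tilde h(w))=\kappa_\mathbb{H}(w)/|\tilde h'(w)|$, this yields $|\tilde h'(\psi_t(w_0))|\to+\infty$ along the orbit. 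Integrating the identity $d\psi_t/dt=i/\tilde h'(\psi_t(w_0))$ and using the monotonicity of $\mathsf{Re}\,\psi_t$, the non-negative quantity $\mathsf{Im}\,\tilde h'/|\tilde h'|^2$ is integrable:
\[
\int_0^{+\infty}\frac{\mathsf{Im}\,\tilde h'(\psi_s(w_0))}{|\tilde h'(\psi_s(w_0))|^2}\,ds=L-\mathsf{Re}\,w_0<+\infty,
\]
whereas the divergence $|\mathsf{Im}\,\psi_t(w_0)|\to+\infty$ forces $\int_0^{+\infty}\mathsf{Re}\,\tilde h'(\psi_s)/|\tilde h'(\psi_s)|^2\,ds=+\infty$.

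The main obstacle is converting these asymptotics into a definitive contradiction. The plan is to show that a parabolic self-map of $\mathbb{H}$ admitting an orbit confined to a vertical strip must be a pure imaginary translation $w\mapsto w+i\beta$; this can be pursued via the Herglotz-Pick representation of $G$ combined with the Julia-Wolff-Carath\'eodory theorem at $\infty$, using the integrability estimate above to conclude that the ``drift'' of $G$ at $\infty$ is purely imaginary. But a pure imaginary translation corresponds to $\Omega=\mathbb{H}$ (right half-plane) up to translation, the case of \emph{positive} hyperbolic step, contradicting $\Omega'=\mathbb{C}$.
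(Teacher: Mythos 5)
The first two-thirds of your argument is correct as far as it goes: the reduction to $\mathsf{Re}\,\psi_t(w_0)\le M$, the monotonicity of $t\mapsto\mathsf{Re}\,\psi_t(w_0)$, the fact that $\Omega'=\mathbb{C}$ forces $\mathrm{dist}(p_0+it,\partial\Omega)\to+\infty$ (this is just monotone exhaustion of compacta, no boundary-profile semicontinuity needed), hence $|\tilde h'(\psi_t(w_0))|\to+\infty$, and the integrability of $\mathsf{Re}\,G=\mathsf{Im}\,\tilde h'/|\tilde h'|^2$ along the orbit. But none of this yet contradicts anything, and the concluding step is only a plan whose pivotal lemma is false. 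A parabolic semigroup of $\mathbb{H}$ with an orbit confined to a vertical strip need \emph{not} be a pure imaginary translation: take $\Omega=\mathbb{H}\setminus\{x+iy:0<x\le 1,\ y\le 0\}$, which is simply connected, starlike at infinity, and contains $1+\mathbb{H}$, so by \cite[Theorem~17.7.6]{BCDbook} the associated parabolic semigroup has positive hyperbolic step and finite shift -- every orbit lies in a vertical strip -- yet it is not a group of automorphisms. So ``orbit in a strip $\Rightarrow$ translation group'' cannot be the source of the contradiction; the zero-hyperbolic-step hypothesis must enter the final step in an essential and quantitative way, and your sketch does not show how the Herglotz--Pick representation or the integrability estimate would accomplish this. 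As written, the proof has a genuine gap exactly where the contradiction is supposed to appear.

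There is a much shorter route that uses the zero-step hypothesis directly. Zero hyperbolic step is equivalent to $\Omega'=\mathbb{C}$, which gives $k_{\mathbb{D}}(\phi_t(z),\phi_t(w))=k_{\Omega-it}(h(z),h(w))\to 0$ for all $z,w\in\mathbb{D}$, since $\Omega-it$ eventually contains arbitrarily large discs centered at $h(z)$. Now fix any $z_0\in\mathbb{D}$ and any $R>0$, and pick $w_0\in\mathcal{E}(\tau,R/2)$. By the Denjoy--Wolff Theorem $\phi_t(w_0)\in\mathcal{E}(\tau,R/2)$ for all $t$, and in the half-plane model $k_{\mathbb{H}}(u,v)\ge\frac{1}{2}\bigl|\log(\mathsf{Re}\,u/\mathsf{Re}\,v)\bigr|$, so any point within hyperbolic distance $\epsilon<\frac{1}{2}\log 2$ of $\mathcal{E}(\tau,R/2)$ lies in $\mathcal{E}(\tau,R)$. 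Hence $\phi_t(z_0)\in\mathcal{E}(\tau,R)$ for all large $t$: every orbit eventually enters every horocycle, i.e.\ the shift is infinite. I recommend replacing the generator computation with this argument; the asymptotics you derived, while correct, are not needed.
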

Thus only parabolic semigroups with positive hyperbolic step may be of finite shift.
A crucial fact in studying the shift of a parabolic semigroup is that a finite shift correspond to a regularity property of the Denjoy-Wolff point:
\begin{proposition}[{\cite[Theorem 17.7.5]{BCDbook}}]\label{prop: shift_reg}
	Let $(\phi_t)$ be a parabolic semigroup in $\mathbb{D}$ of positive hyperbolic step, with Denjoy-Wolff point $\tau\in\partial\mathbb{D}$ and Koenigs map $h:\mathbb{D}\to\Omega\subset\mathbb{H}$. Then the semigroup is of finite shift if and only if $\tau$ is a boundary regular fixed point of the holomorphic map $\tilde{h}:=C_\tau^{-1}\circ h:\mathbb{D}\to\mathbb{D}$, i.e. $\angle\lim\limits_{z\to\tau} \tilde{h}(z)=\tau$ and the angular derivative $\tilde{h}'(\tau)\in (0,+\infty)$. In the case $\Omega\subset -\mathbb{H}$, the same result holds by considering the map $\tilde{h}:=(-C_\tau)^{-1}\circ h$.
\end{proposition}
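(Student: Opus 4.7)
The plan is to transfer the problem to the right half-plane via the Cayley transform and then use Julia's lemma on the conjugated Koenigs map. First, I would set $\psi_t:=C_\tau\circ\phi_t\circ C_\tau^{-1}:\mathbb{H}\to\mathbb{H}$ and $g:=h\circ C_\tau^{-1}:\mathbb{H}\to\Omega\subset\mathbb{H}$, so that the Koenigs identity becomes $g\circ\psi_t(w)=g(w)+it$. A direct computation using $C_\tau(z)=(\tau+z)/(\tau-z)$ shows that $C_\tau$ sends the horocycle $\mathscr{E}(\tau,R)$ to the half-plane $\{\mathsf{Re}\,w>1/R\}$, so finite shift is equivalent to the existence of $M>0$ with $\mathsf{Re}\,\psi_t(1)\leq M$ for every $t\geq 0$. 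On the other hand, a boundary regular fixed point of $\tilde h$ at $\tau$ corresponds, under Cayley conjugation, to a BRFP of $g$ at $\infty$ as a self-map of $\mathbb{H}$, namely $\angle\lim_{w\to\infty}g(w)=\infty$ together with the finiteness of the boundary dilatation
\[
\alpha:=\liminf_{w\to\infty}\frac{\mathsf{Re}\,w}{\mathsf{Re}\,g(w)},
\]
in which case $\tilde h'(\tau)=\alpha$.

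For the direction BRFP $\Rightarrow$ finite shift I would invoke Julia's lemma in $\mathbb{H}$: finiteness of $\alpha$ yields the horocyclic inclusion $g(\{\mathsf{Re}\,w>c\})\subset\{\mathsf{Re}\,w>c/\alpha\}$ for every $c>0$, which rewrites as the pointwise bound $\mathsf{Re}\,w\leq\alpha\,\mathsf{Re}\,g(w)$ on the whole of $\mathbb{H}$. Plugging in $w=\psi_t(1)$ and using the Koenigs identity $\mathsf{Re}\,g(\psi_t(1))=\mathsf{Re}\,g(1)$ gives $\mathsf{Re}\,\psi_t(1)\leq\alpha\,\mathsf{Re}\,g(1)$ for all $t\geq 0$, whence \emph{finite shift}. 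Conversely, assuming finite shift, the Denjoy--Wolff theorem forces $\psi_t(1)\to\infty$, and along this specific sequence
\[
\frac{\mathsf{Re}\,\psi_t(1)}{\mathsf{Re}\,g(\psi_t(1))}=\frac{\mathsf{Re}\,\psi_t(1)}{\mathsf{Re}\,g(1)}\leq\frac{M}{\mathsf{Re}\,g(1)},
\]
so $\alpha<\infty$; the Julia--Wolff--Carath\'eodory theorem then upgrades this $\liminf$ bound to the existence of a finite positive angular derivative of $\tilde h$ at $\tau$.

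The most delicate ingredient is the auxiliary fact that $\angle\lim_{w\to\infty}g(w)=\infty$, equivalently $\angle\lim_{z\to\tau}\tilde h(z)=\tau$, which is needed to identify the non-tangential limit $\sigma$ supplied by Julia--Carath\'eodory with $\tau$ rather than some other boundary point. This should be extracted from the very normalization of the Koenigs domain (starlike at infinity in the positive imaginary direction) together with a Lindel\"of-type argument along the orbits $\phi_t(z)$, whose images $h(\phi_t(z))=h(z)+it$ tend to infinity inside $\Omega$. The case $\Omega\subset-\mathbb{H}$ is handled identically by replacing $C_\tau$ with $-C_\tau$ throughout.
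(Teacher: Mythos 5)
The paper gives no proof of this proposition; it is quoted from \cite[Theorem 17.7.5]{BCDbook}, so your argument can only be judged on its own merits. Your reduction to the half-plane is correct ($C_\tau$ does send $\mathscr{E}(\tau,R)$ to $\{\mathsf{Re}\,w>1/R\}$, and finite shift is indeed the boundedness of $\mathsf{Re}\,\psi_t(1)$), and the direction \emph{BRFP $\Rightarrow$ finite shift} is fine: Julia's lemma gives $\mathsf{Re}\,w\le \tilde h'(\tau)\,\mathsf{Re}\,g(w)$ on all of $\mathbb{H}$, and evaluating at $w=\psi_t(1)$ together with $\mathsf{Re}\,g(\psi_t(1))=\mathsf{Re}\,g(1)$ bounds $\mathsf{Re}\,\psi_t(1)$.

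The converse direction has a genuine gap, rooted in a misidentification of the boundary dilatation. The BRFP condition for $g$ at $\infty$ is $\inf_{w\in\mathbb{H}}\frac{\mathsf{Re}\,g(w)}{\mathsf{Re}\,w}>0$, equivalently $\limsup_{w\to\infty}\frac{\mathsf{Re}\,w}{\mathsf{Re}\,g(w)}<+\infty$ (or the finiteness of $\liminf_{z\to\tau}\frac{1-|\tilde h(z)|}{1-|z|}$, which in half-plane coordinates is a liminf of $\frac{\mathsf{Re}\,g(w)}{\mathsf{Re}\,w}\cdot\frac{|w|^2}{|g(w)|^2}$, not of $\frac{\mathsf{Re}\,w}{\mathsf{Re}\,g(w)}$). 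Your $\alpha:=\liminf_{w\to\infty}\frac{\mathsf{Re}\,w}{\mathsf{Re}\,g(w)}$ is a \emph{liminf}, so exhibiting one sequence $w_n=\psi_{t_n}(1)\to\infty$ along which the ratio is bounded proves only $\alpha<+\infty$, and neither Julia's lemma nor Julia--Wolff--Carath\'eodory lets you propagate a bound on this particular quotient along a single sequence to a global inequality. A concrete counterexample to the step you need: $g(w)=\sqrt{w}$ maps $\mathbb{H}$ into $\mathbb{H}$, and along $w_n=1+in$ one has $w_n\to\infty$, $g(w_n)\to\infty$ and $\frac{\mathsf{Re}\,w_n}{\mathsf{Re}\,g(w_n)}\to 0$, yet $\angle\lim_{w\to\infty}g(w)/w=0$, so $\infty$ is \emph{not} a boundary regular fixed point. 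In your setting the orbit $\psi_t(1)$ approaches $\infty$ extremely tangentially ($\mathsf{Re}\,\psi_t(1)\le M$ while $|\psi_t(1)|\to\infty$), which is exactly the regime where $\frac{\mathsf{Re}\,w}{\mathsf{Re}\,g(w)}$ and $\frac{1-|z|}{1-|\tilde h(z)|}$ decouple; to make the Julia-lemma hypothesis hold along the orbit you would need the extra estimate $|\psi_t(1)|\lesssim |g(\psi_t(1))|\asymp t$, which is essentially equivalent to what is being proved (it is part \ref{t_sp} of Theorem \ref{main}) and cannot be assumed. A correct converse must produce a bound valid on all of $\mathbb{H}$, e.g.\ by showing $\lim_{r\to+\infty}\mathsf{Re}\,g(r)/r>0$ or by analyzing the positive harmonic function $w\mapsto\lim_{t\to+\infty}\mathsf{Re}\,\psi_t(w)$ via its Herglotz representation, rather than by evaluating along the single orbit.
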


In particular, when the shift is finite the map $\tilde{h}$ is \emph{semi-conformal} at the boundary fixed point $\tau$, since
\[
\angle \lim_{z\to\tau} \arg \left(\frac{\tau-\tilde{h}(z)}{\tau-z}\right)=0.
\]

This gives the following property for Stolz regions (see \cite[Lemma 13.2.3]{BCDbook}): for any $\epsilon>0$, $M>1$ and $M'>M$ there exists $\delta>0$ for which
\begin{equation*}
	\mathcal{S}(\tau,M)\cap D(\tau,\delta)\subset \tilde{h}(\mathcal{S}(\tau,M')\cap D(\tau,\epsilon)),
\end{equation*}
or equivalently
\begin{equation} \label{eq: stolz-finite}
	C_\tau(\mathcal{S}(\tau,M)\cap D(\tau,\delta))\subset h(\mathcal{S}(\tau,M')\cap D(\tau,\epsilon)).
\end{equation}

The right-hand side of \eqref{eq: stolz-finite} is contained in $\Omega$, hence \eqref{eq: stolz-finite} gives a prescription on the shape of $\Omega$ when the shift of the semigroup is finite. If we define $\Gamma(\alpha,r):=\{z\in\mathbb{H}:|z|>r,\,|\arg z|<\alpha\}$, it is easy to show that this condition is equivalent to the following:

\begin{property}\label{prty_cone}
		For all $\beta\in(0,\pi/2)$ there exists $r_\beta>0$ such that $\Gamma(\beta,r_\beta)\subset\Omega$.	
\end{property}
In this case we also say that $\Omega$ has an \emph{inner tangent at infinity along the positive real semi-axis}.
This property of the Koenigs domain is a \emph{necessary} hypothesis in order to have a finite shift. When it holds, there exists $r_0>0$ such that $[r_0,+\infty)\subset\Omega$. Moreover we can assume that $r_0+W_0\subset\Omega$, where $W_0=V(\beta_0,\beta_0)=\{\zeta\in\mathbb{C}:-\beta_0<\arg\zeta<\beta_0\}$ for some $\beta_0\in(0,\pi/2)$. Then it can be shown that the half-line $[2r_0,+\infty)$  is (the image of) a quasi-geodesic for the hyperbolic metric.

\begin{proposition}\label{prop: quasig}
	Let $\Omega$ be a simply connected domain in $\mathbb{C}$ satisfying Property \ref{prty_cone}. Choose $r_0$ as above. Then the curve $\sigma:[2r_0,+\infty)\to\Omega,\sigma(r)=r$ is a quasi-geodesic for the hyperbolic metric on $\Omega$.
\end{proposition}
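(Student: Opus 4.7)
The plan is to derive matching upper and lower bounds on the hyperbolic length of $\sigma$ and on the hyperbolic distance $k_\Omega(\sigma(s),\sigma(t))$ by combining the classical Schwarz--Pick estimate $\varkappa_\Omega(z;v)\leq |v|/d(z,\partial\Omega)$ with the Koebe $1/4$-theorem lower bound $\varkappa_\Omega(z;v)\geq |v|/(4\,d(z,\partial\Omega))$, both of which hold on every simply connected $\Omega\subsetneq\mathbb{C}$.

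For the length upper bound, with $r_0$ and $\beta_0\in(0,\pi/2)$ as in the statement, the cone inclusion $r_0+W_0\subset\Omega$ gives $d(r,\partial\Omega)\geq(r-r_0)\sin\beta_0\geq(r\sin\beta_0)/2$ for all $r\geq 2r_0$, so integrating the Schwarz--Pick bound along $\sigma$ yields
\[
\ell_\Omega(\sigma;s,t)\;\leq\;\int_s^t\frac{dr}{d(r,\partial\Omega)}\;\leq\;\frac{2}{\sin\beta_0}\log(t/s),\qquad 2r_0\leq s<t,
\]
which in particular shows that $\sigma$ is Lipschitz for $k_\Omega$.

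For the distance lower bound I pick any $w_0\in\partial\Omega$ (which is nonempty since $\Omega$ is hyperbolic). The Koebe density bound together with $d(z,\partial\Omega)\leq|z-w_0|$ gives, for any piecewise $C^1$ curve $\gamma$ in $\Omega$ from $s$ to $t$,
\[
\ell_\Omega(\gamma)\;\geq\;\frac{1}{4}\int_\gamma\frac{|dz|}{|z-w_0|}\;\geq\;\frac{1}{4}\left|\int_\gamma\frac{dz}{z-w_0}\right|.
\]
Here the simple connectedness of $\Omega$ enters crucially: since $w_0\notin\Omega$, the $1$-form $dz/(z-w_0)$ is exact on $\Omega$, hence the right-hand integral depends only on the endpoints and equals $L(t)-L(s)$ for a fixed branch $L$ of $\log(z-w_0)$ on $\Omega$. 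Passing to real parts and using the elementary estimates $|t-w_0|\geq t/2$, $|s-w_0|\leq 3s/2$ (valid as soon as $s\geq 2|w_0|$) yields
\[
k_\Omega(s,t)\;\geq\;\frac{1}{4}\log(t/s)-\frac{\log 3}{4}
\]
for all sufficiently large $s<t$; the bounded initial segment $[2r_0,2|w_0|]$, where the estimate could fail, is a Euclidean compact subset of $\Omega$ on which $\sigma$ has bounded hyperbolic length, and is therefore absorbed into the additive constant. Combining with the length upper bound gives the $(A,B)$-quasi-geodesic inequality with $A=8/\sin\beta_0$ and $B$ proportional to $(\log 3)/\sin\beta_0$, and the divergence $k_\Omega(\sigma(2r_0),\sigma(r))\to+\infty$ is immediate from the same lower bound. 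The main conceptual subtlety is that the Cauchy-type estimate must hold along \emph{every} admissible $\gamma$, not only along $\sigma$ itself; this is what turns a length estimate into a distance estimate, and it is precisely where simple connectedness of $\Omega$ is exploited. Notice that no outer control on $\Omega$ (such as $\Omega\subset\mathbb H$) is needed.
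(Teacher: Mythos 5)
Your proof is correct, but it takes a genuinely different route from the paper's, most visibly in the lower bound. The paper gets $k_\Omega(s_1,s_2)\geq k_{\mathbb H}(s_1,s_2)=\frac12\log(s_2/s_1)$ in one line by domain monotonicity, using the inclusion $\Omega\subset\mathbb H$ (which is tacit from the ambient context of Koenigs domains of parabolic semigroups of positive hyperbolic step, though not literally part of the proposition's hypotheses); and it gets the length upper bound by comparing with the exact sector metric, $\ell_\Omega(\sigma;s_1,s_2)\leq k_{W_0}(s_1-r_0,s_2-r_0)=\frac{\pi}{4\beta_0}\log\frac{s_2-r_0}{s_1-r_0}$, since $\sigma$ is a genuine geodesic of $r_0+W_0$ by symmetry. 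This yields the clean constants $A=\pi/(2\beta_0)$, $B=(\pi\log 2)/(4\beta_0)$. You instead work entirely from the two-sided density estimate $\frac{|v|}{4\,d(z,\partial\Omega)}\leq\varkappa_\Omega(z;v)\leq\frac{|v|}{d(z,\partial\Omega)}$: the upper bound is morally the same as the paper's (both exploit the cone $r_0+W_0$), while your lower bound — integrating the exact form $dz/(z-w_0)$ for a single boundary point $w_0$ along an \emph{arbitrary} competitor curve and passing to the real part of the logarithm — replaces the half-plane comparison entirely. What this buys you is independence from any outer containment of $\Omega$: your argument proves the statement for every simply connected $\Omega\subsetneq\mathbb C$ satisfying Property \ref{prty_cone}, exactly as the proposition is phrased, at the cost of worse constants ($A=8/\sin\beta_0$ versus $\pi/(2\beta_0)$) and the small bookkeeping step of absorbing the initial segment $[2r_0,2|w_0|]$ into the additive constant, which you handle correctly. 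Both arguments are complete; yours is more self-contained and slightly more general, the paper's is shorter given the monotonicity and sector computations it can cite.
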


\begin{proof}
	For $s_2>s_1\geq r_0$ one has \[
	k_\Omega(s_1,s_2)\geq k_\mathbb{H}(s_1,s_2)=\frac{1}{2}\log\frac{s_2}{s_1}.
	\]
	The first inequality follows from the monotonicity relation of hyperbolic metrics: as $\Omega\subset\mathbb H$, then $k_\Omega\geq k_\mathbb H$. It is a particular case of \cite[Proposition 1.3.10]{BCDbook}. The last follows from an easy computation by considering the conformal equivalence from $\mathbb D$ to $\mathbb H$ given by a Cayley transform.
	
	Again by the monotonicity result cited above one can show that $\Omega\supset r_0+W_0$ implies that $\ell_\Omega(\sigma;s_1,s_2)\leq \ell_{r_0+W_0}(\sigma;s_1,s_2)$. By symmetry  (see \cite[Proposition 6.1.3]{BCDbook}), $\sigma$ is a geodesic line for the metric $k_{r_0+W_0}$; thus, since the map $z\mapsto z-r_0$ maps $r_0+W_0$ conformally onto $W_0$ and we have conformal equivalence of hyperbolic metrics (see the beginning of Section \ref{sec:pre}), \[
	\ell_{r_0+W_0}(\sigma;s_1,s_2)=k_{r_0+W_0}(s_1,s_2)=k_{W_0}(s_1-r_0,s_2-r_0).
	\]
	So if $s_1\geq 2r_0$, then
\[
	\ell_\Omega(\sigma;s_1,s_2)\leq k_{W_0}(s_1-r_0,s_2-r_0)=\frac{\pi}{4\beta_0} \log \frac{s_2-r_0}{s_1-r_0}\leq \frac{\pi}{4\beta_0} \log \frac{s_2}{s_1}+\frac{\pi}{4\beta_0} \log 2;\]
	the second equality comes from the conformal equivalence $W_0\cong \mathbb{H}$ given by the map $w\mapsto w^{\pi/(2\beta_0)}$, while for the latter inequality the assumption $s_1\geq 2r_0$ implies that $s_1-r_0\geq s_1/2$. Thus $\sigma$ is a $(A,B)$-quasi-geodesic for $k_\Omega$ if we take $A=\pi/(2\beta_0)>1$ and $B=(\pi\log 2)/(4\beta_0)>0$.
\end{proof}

Another consequence of Proposition \ref{prop: shift_reg} is the following.

\begin{corollary}\label{cor: shift_metric}
	Let $(\phi_t)$ be a parabolic semigroup in $\mathbb{D}$ of positive hyperbolic step, with Denjoy-Wolff point $\tau\in\partial\mathbb{D}$ and Koenigs map $h:\mathbb{D}\to\Omega\subset\mathbb{H}$. The following are equivalent:
	\begin{enumerate}[label=(\roman*)]
		\item \label{cor_sh_1}$(\phi_t)$ is of finite shift.
		\item \label{cor_sh_2}$	\liminf\limits_{w\to\tau}\left[ k_\mathbb{D}(0,w)-k_\mathbb{D}(0,C_\tau^{-1}(h(w)))\right]<+\infty$.
		\item \label{cor_sh_3}There exists the non-tangential limit $\angle\lim\limits_{w\to\tau}\left[ k_\mathbb{D}(0,w)-k_\mathbb{D}(0,C_\tau^{-1}(h(w)))\right]=L\in\mathbb{R}$.
	\end{enumerate}

\end{corollary}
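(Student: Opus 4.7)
The plan is to convert the hyperbolic-distance expression appearing in \ref{cor_sh_2} and \ref{cor_sh_3} into a statement about the Julia quotient $(1-|\tilde{h}(w)|)/(1-|w|)$ of the map $\tilde{h}=C_\tau^{-1}\circ h : \mathbb{D}\to\mathbb{D}$, and then to invoke the Julia--Wolff--Carath\'eodory theorem together with Proposition~\ref{prop: shift_reg}. Starting from the explicit formula $k_\mathbb{D}(0,z)=\tfrac{1}{2}\log\frac{1+|z|}{1-|z|}$ recalled in Section~\ref{sec:pre}, a direct computation gives
\[
k_\mathbb{D}(0,w)-k_\mathbb{D}(0,\tilde{h}(w))=\tfrac{1}{2}\log\frac{(1+|w|)(1-|\tilde{h}(w)|)}{(1-|w|)(1+|\tilde{h}(w)|)}.
\]
As $w\to\tau$ the factors $(1+|w|)$ and $(1+|\tilde{h}(w)|)$ are trapped in a compact subset of $(0,2]$, so the quantity in \ref{cor_sh_2}--\ref{cor_sh_3} is, up to bounded additive errors, $\tfrac{1}{2}\log\frac{1-|\tilde{h}(w)|}{1-|w|}$.

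For \ref{cor_sh_1}$\Rightarrow$\ref{cor_sh_3}, Proposition~\ref{prop: shift_reg} provides $\tilde{h}'(\tau)=d\in(0,+\infty)$, and the Julia--Wolff--Carath\'eodory theorem identifies $d$ with the non-tangential limit of $(1-|\tilde{h}(w)|)/(1-|w|)$. Plugging this into the above display yields the non-tangential limit $L=\tfrac{1}{2}\log d\in\mathbb{R}$. The implication \ref{cor_sh_3}$\Rightarrow$\ref{cor_sh_2} is immediate, by approaching $\tau$ along a Stolz region.

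The substantive step is \ref{cor_sh_2}$\Rightarrow$\ref{cor_sh_1}. A sequence $w_n\to\tau$ along which the difference is bounded forces $|\tilde{h}(w_n)|\to 1$ and, via the display,
\[
\liminf_{w\to\tau}\frac{1-|\tilde{h}(w)|}{1-|w|}=:\alpha<+\infty.
\]
Julia's Lemma then yields the existence of an angular limit $\tilde{h}(\tau)=\tau'\in\partial\mathbb{D}$ together with the horocycle invariance $\tilde{h}(\mathcal{E}(\tau,R))\subset\mathcal{E}(\tau',\alpha R)$ for every $R>0$. To identify $\tau'=\tau$ I use the semigroup dynamics: for any $z\in\mathbb{D}$, Denjoy--Wolff's theorem places the orbit $\phi_t(z)$ inside some horocycle $\mathcal{E}(\tau,R_z)$, so $\tilde{h}(\phi_t(z))=C_\tau^{-1}(h(z)+it)$ lies in $\mathcal{E}(\tau',\alpha R_z)$ for all $t\ge 0$; since $h(z)+it\to\infty$ in $\mathbb{H}$, its image $C_\tau^{-1}(h(z)+it)$ converges in the Euclidean sense to $C_\tau^{-1}(\infty)=\tau$, and the closure of $\mathcal{E}(\tau',\alpha R_z)$ meets $\partial\mathbb{D}$ only at $\tau'$. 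Hence $\tau'=\tau$. The Julia--Wolff--Carath\'eodory theorem then upgrades the finite Julia liminf to a non-tangential limit equal to $\tilde{h}'(\tau)\in(0,+\infty)$, and Proposition~\ref{prop: shift_reg} finally delivers finite shift.

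The main obstacle is this last identification $\tau'=\tau$: it is the only step that is not a formal manipulation of boundary-value formulas, and it is handled by pairing the horocycle invariance furnished by Julia's Lemma with the explicit behaviour of the Koenigs equation under the Cayley transform at infinity.
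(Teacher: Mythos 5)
Your proof is correct and follows essentially the same route as the paper: both reduce the bracketed quantity to $\tfrac12\log\frac{1-|\tilde{h}(w)|}{1-|w|}$ up to bounded additive errors and then run the equivalence through the dilation coefficient $\alpha_{\tilde{h}}(\tau)$, the Julia--Wolff--Carath\'eodory theorem and Proposition~\ref{prop: shift_reg}. The only notable difference is that you explicitly check that the boundary point $\tau'$ produced by Julia's Lemma coincides with $\tau$ (via the orbit $C_\tau^{-1}(h(z)+it)\to\tau$), a step the paper leaves implicit in its appeal to Proposition~\ref{prop: shift_reg}.
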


\begin{proof}
The fact that $\tau$ is a boundary regular  fixed point for the map $\tilde{h}=C_\tau^{-1}\circ h$ is equivalent to $\ref{cor_sh_1}$ by Proposition \ref{prop: shift_reg}. Hence,   by the Julia-Wolff-Carath\'eodory's Theorem \cite[Theorem 1.7.3]{BCDbook} this is equivalent  to $\alpha_{\tilde{h}}(\tau)<+\infty$, where $\alpha_{\tilde h}(\tau)$ is the dilation coefficient 
\[
\alpha_{\tilde{h}}(\tau)=\liminf\limits_{w\to\tau}\,\frac{1-|\tilde{h}(w)|}{1-|w|}.\]
Now by \cite[Lemma 1.4.5]{BCDbook}, 
\begin{equation}\label{eq:dilation_log}
\frac{1}{2}\log \alpha_{\tilde{h}}(\tau)=\liminf_{w\to\tau}\left[ k_\mathbb{D}(0,w)-k_\mathbb{D}(0,\tilde{h}(w))\right],
\end{equation}
hence $\ref{cor_sh_1}$ is equivalent to $\ref{cor_sh_2}$. Note that \eqref{eq:dilation_log} implies that $\alpha_{\tilde h}(\tau)$ is always greater than zero, as the term in brackets is bounded from below by $k_\mathbb{D}(\tilde h(0),\tilde h(w))-k_\mathbb{D}(0,\tilde{h}(w))\geq -k_\mathbb D(0,\tilde h(0))$, using Schwarz-Pick Lemma and triangle inequality.

The proof of \eqref{eq:dilation_log} relies on the fact that, by direct computation,
\begin{equation}\label{eq:dilation_pf}
k_\mathbb{D}(0,w)-k_\mathbb{D}(0,\tilde{h}(w))=\frac{1}{2}\log\left(
\frac{1-|\tilde h(w)|}{1-|w|}\right)+O(1)\quad\text{as }w\to\tau.
\end{equation}
Assuming $\ref{cor_sh_1}$, then Proposition \ref{prop: shift_reg} implies that $\angle\lim\limits_{w\to\tau}\tilde h(w)=\tau$, so by \cite[Proposition 1.7.4]{BCDbook}, \[
\angle\lim_{w\to\tau}\frac{1-|\tilde{h}(w)|}{1-|w|}=\alpha_{\tilde h}(\tau)>0.
\]
Combining this with \eqref{eq:dilation_pf}, there exists $\angle\lim\limits_{w\to\tau}\,[k_\mathbb{D}(0,w)-k_\mathbb{D}(0,\tilde{h}(w))]>-\infty$. Since $\ref{cor_sh_2}$ is equivalent to $\ref{cor_sh_1}$, this limit cannot be $+\infty$, so $\ref{cor_sh_3}$ holds.

As $\ref{cor_sh_3}\Rightarrow\ref{cor_sh_2}$ is trivial, we have done.
\end{proof}

\section{Proof of Theorem \ref{main}}\label{sec:proof}
We are going to prove independently the equivalences $\ref{t_sh}\Leftrightarrow\ref{t_sp}$, $\ref{t_sh}\Leftrightarrow\ref{t_hy}$ and then $\ref{t_sh}\Leftrightarrow\ref{t_ser}$, after some needed remarks.
\begin{proof}[Proof of Theorem \ref{main}, $\ref{t_sh}\Leftrightarrow\ref{t_sp}$]
	Consider the semigroup $(\psi_t)$ associated to $(\phi_t)$ in $\mathbb{H}$, $\psi_t=C_\tau\circ\phi_t\circ C_\tau^{-1}$ and write $\psi_t(1)=\rho_t e^{i\theta_t}$. Then $(\phi_t)$ is of finite shift if and only if there exists $M>0$ such that $\mathsf{Re}\,\psi_t(1)=\rho_t \cos \theta_t\in [1,M)$ for all $t\geq 0$. Equivalently
	\[
	0\leq \frac{1}{2}\log \rho_t - \frac{1}{2}\log\frac{1}{\cos \theta_t}<\frac{1}{2}\log M.
	\]
	Now $v^O_\phi(t)=(1/2)\log \rho_t$ and $v^T_\phi(t)-(1/2)\log(1/\cos \theta_t)=O(1)$ as $t\to+\infty$. See for instance equations (3.6) in \cite{Cor}. So $(\phi_t)$ is of finite shift if and only if $v^O_\phi(t)-v^T_\phi(t)=O(1)$ for $t\to+\infty$.
	
	To conclude the proof it is enough to show that for a semigroup of finite shift one has
	\[
	-\infty<\liminf_{t\to+\infty}\left[v^T_\phi(t)-\frac{1}{2}\log t\right]\leq\limsup_{t\to+\infty}\left[v^T_\phi(t)-\frac{1}{2}\log t\right]<+\infty.
	\]
	
	The last inequality is given in \cite[Theorem 1.1]{Cor}. For the first one, let assume that the Koenigs domain $\Omega$ has smooth boundary. This can be done by the `horocycle reduction' described in \cite[Proposition 3.7]{Cor}. In the case of finite shift, $\Omega$ must contain a vertical semi-sector $p+iV(\alpha,0)$ where $p\in\mathbb{H}$ and $V(\alpha,0)=\{\zeta\in\mathbb{C}:-\alpha<\arg\zeta<0\}$, as shown in \cite[Theorem 17.7.6(a)]{BCDbook}. Let us consider \[ \Omega^\sharp:=\Omega\cup\{z\in\mathbb{C}: 0<\mathsf{Re}\,z<\mathsf{Re}\,p,\,\mathsf{Im}\,z>\mathsf{Im}\,p\}.\] This is again a Koenigs domain of a parabolic semigroup $(\phi^\sharp_t)$. We can assume that its Denjoy-Wolff point is the same as $(\phi_t)$.
	
	It turns out that
	$\liminf\limits_{t\to+\infty}\,[v^T_\phi(t)-v^T_{\phi^\sharp}(t)]>-\infty$. We give here a sketch of the proof of this claim. Let $h$ and $h^\sharp$ be the respective Koenigs maps. It is possible to construct as in \cite[Section 4]{BCDGZ} two curves $\sigma:[0,+\infty)\to\Omega$ and $\sigma^\sharp:[0,+\infty)\to \Omega^\sharp$ such that $\lim\limits_{s\to+\infty}\mathsf{Im}\,\sigma(s)=\lim\limits_{s\to+\infty}\mathsf{Im}\,\sigma^\sharp(s)=+\infty$, $\lim\limits_{s\to+\infty}h^{-1}(\sigma(s))=\lim\limits_{s\to+\infty}(h^\sharp)^{-1}(\sigma^\sharp(s))=\tau$, which are quasi-geodesics for the metrics $k_\Omega$ and $k_{\Omega^\sharp}$ respectively and so that $\mathsf{Re}\,\sigma(s)\geq\mathsf{Re}\,\sigma^\sharp(s)\geq \mathsf{Re}\,p$ for any $s\geq s_0>0$. By possibly modifying the curves between $0$ and $s_0$, we may assume that this relation holds in general and that the initial points are the same. If we take the point $p+it$ for some $t\geq 0$ and $s_t$ such that $k_\Omega(p+it,\sigma(s_t))=\inf_s\,k_\Omega(p+it,\sigma(s))$, the $k_\Omega$-geodesic segment $L$ joining $p+it$ and $\sigma(s_t)$ must intersect the image of $\sigma^\sharp$ at some point $q\in L\cap\sigma([0,+\infty))$: the argument is analogous to the one shown in the proof of \cite[Theorem 4.1]{Cor} and uses Jordan's Curve Theorem. Hence
	\[
	\inf_s\,k_\Omega(p+it,\sigma(s))=k_\Omega(p+it,\sigma(s_t))=\ell_\Omega(L)\geq k_\Omega(p+it,q),
	\]
	since $q$ is an intermediate point along the geodesic $L$. Thus, recalling that $\Omega\subset\Omega^\sharp$ implies $k_\Omega\geq k_{\Omega^\sharp}$,
	\[\inf_s\,k_\Omega(p+it,\sigma(s))\geq k_\Omega(p+it,q)\geq k_{\Omega^\sharp}(p+it,q)\geq\inf_s\, k_{\Omega^\sharp}(p+it,\sigma^\sharp(s)).\]
	By \cite[Proposition 3.6]{Cor}, the two sides of the above inequality compare with $v^T_\phi(t)$ and $v^T_{\phi^\sharp}(t)$ respectively, up to additive constants not depending on $t$. This proves the claim.
	
	On the other hand, since $\Omega^\sharp\supset \mathsf{Re}\,p +i V(\alpha,0)$, by \cite[Theorem 4.2]{Cor} and \cite[Corollary  16.2.6]{BCDbook} it is also true that $\liminf\limits_{t\to+\infty}\,[v^T_{\phi^\sharp}(t)-(1/2)\log t]>-\infty$ and this concludes the proof.
\end{proof}

\begin{proof}[Proof of Theorem \ref{main}, $\ref{t_sh}\Leftrightarrow\ref{t_hy}$]
By Corollary \ref{cor: shift_metric}, using the conformal invariance of hyperbolic metric, the shift of $(\phi_t)$ is finite exactly when \begin{equation}\label{eq: cor_sh_rest}
\liminf_{w\to\tau}\left[ k_\Omega(h(0),h(w))-k_\mathbb{H}(1,h(w))\right]<+\infty\iff\angle\lim_{w\to\tau}\left[ k_\Omega(h(0),h(w))-k_\mathbb{H}(1,h(w))\right]\in \mathbb{R}.
\end{equation}
By assumption the half-line $[r_0,+\infty)$ is the image of a quasi-geodesic $\sigma$ for $k_\Omega$, which converges to the point at infinity in the Riemann sphere $\mathbb{C}_\infty$. This last is true also for the vertical half-line $[0,+\infty)\ni t\mapsto r_0+it\in \Omega$. As $h^{-1}(r_0+it)\to\tau$ as $t\to+\infty$ and \[
\{z\in\mathbb{C}:\mathsf{Re}\,z>r_0,\,\mathsf{Im}\,z>0\}\subset\Omega,
\]
one deduces by \cite[Proposition 3.3.5]{BCDbook} that $h^{-1}(r)\to\tau$ as $r$ goes to $+\infty$.
A $k_\Omega$-geodesic ray is provided by $\gamma:[1,+\infty)\to\Omega$, $\gamma(r):=h\circ C_\tau^{-1}(r)=h(\tau(r-1)/(r+1))$, as $[1,+\infty)$ is a geodesic for the metric $k_\mathbb{H}$ and $h\circ C_\tau^{-1}$ maps conformally $\mathbb{H}$ onto $\Omega$. Note that $\gamma(1)=h(0)$. We can extend the curve $\sigma$ with an arc in $\Omega$ joining $r_0$ to $h(0)$, still having a quasi-geodesic ray which now starts from the same point of $\gamma$.
By Proposition \ref{prop:shad}, there exists a constant $M>0$ such that for all $r\geq r_0$ it is $k_\Omega(r,\gamma)=k_\Omega(r,\gamma(s(r)))<M$, where $\gamma(s(r))$ is the (unique) hyperbolic projection of $r$ onto the geodesic $\gamma$, and the convergence $h^{-1}(r)\to\tau$ is non-tangential: hence by replacing $w$ with $h^{-1}(r)$ in \eqref{eq: cor_sh_rest}, the shift of $(\phi_t)$ is finite if and only if 
\begin{equation}\label{eq:dif4shift}
k_\Omega(h(0),r)-k_\mathbb{H}(1,r)=k_\Omega(h(0),r)-\frac{1}{2}\log r
\end{equation}
goes to a finite limit when $r\to+\infty$. By Proposition \ref{prop:pyt}, this implies that the quantity
\[
k_\Omega(h(0),r)-k_\Omega(h(0),(h\circ C_\tau^{-1})(s(r)))=k_\Omega(h(0),r)-k_\mathbb{H}(1,s(r))=k_\Omega(h(0),r)-\frac{1}{2}\log s(r)
\]
is bounded for $r\in[r_0,+\infty)$. Combining this with \eqref{eq:dif4shift}, we get that finite shift corresponds to boundedness from above of \[
\frac{1}{2}\left|\log \frac{s(r)}{r}\right|=k_\mathbb{H}(r,s(r))=k_\Omega(\gamma(r),\gamma(s(r)))
\] as $r_0<r\to+\infty$. But since $k_\Omega(r,\gamma(s(r)))<M$, this is equivalent to \[
\limsup_{r\to+\infty}\, k_\Omega(r,\gamma(r))<+\infty,\quad\text{where }\gamma(r)=h\circ C_\tau^{-1}(r)
\]
by using again Proposition \ref{prop:pyt}.
\end{proof}

It remains to prove the equivalence $\ref{t_sh}\Leftrightarrow\ref{t_ser}$.
In \cite{Kar} it is shown the following result.

\begin{theorem}\label{thm_karamanlis}
	Let $\Omega$ be the Koenigs domain of a parabolic semigroup in $\mathbb{D}$ and let assume that $\mathbb{H}$ is the smallest vertical half-plane containing $\Omega$. Assume that $\Omega$ satisfies Property \ref{prty_cone} and let $r_0$ such that $[r_0,+\infty)\subset\Omega$. Let denote by $\eta_\Omega(r)$ the Euclidean length divided by $r$ of the connected component of $\Omega\cap\{|z|=r\}$ containing the point $r\geq r_0$ in the real axis. Then the semigroup is of finite shift if and only if
	\begin{equation}\label{int1}
		\int_{r_0}^{\infty}\left(\frac{1}{\eta_\Omega(r)}-\frac{1}{\pi}\right)\,\frac{\mathrm{d}r}{r}<+\infty.
	\end{equation}
\end{theorem}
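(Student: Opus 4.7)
The plan is to rephrase finite shift as boundedness of $k_\Omega(h(0),r)-\tfrac{1}{2}\log r$ as $r\to+\infty$ along the real axis, then to compute this quantity via a conformal change of variables as an explicit integral involving $\eta_\Omega$. By Corollary \ref{cor: shift_metric} and the conformal invariance of the hyperbolic metric---exactly as in the proof of $\ref{t_sh}\Leftrightarrow\ref{t_hy}$---finite shift is equivalent to
\[
\liminf_{r\to+\infty}\bigl[k_\Omega(h(0),r)-k_{\mathbb H}(1,r)\bigr]=\liminf_{r\to+\infty}\bigl[k_\Omega(h(0),r)-\tfrac{1}{2}\log r\bigr]<+\infty;
\]
here the quasi-geodesic property of $[2r_0,+\infty)$ for $k_\Omega$ (Proposition \ref{prop: quasig}) ensures that $h^{-1}(r)\to\tau$ non-tangentially, so the substitution $w=h^{-1}(r)$ is admissible in Corollary \ref{cor: shift_metric}.

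Next, I would introduce the principal branch $L(z)=\log z$, which is a biholomorphism of $\Omega\subset\mathbb H$ onto a simply connected domain $\widetilde\Omega\subset\{w\in\mathbb C:|\mathsf{Im}\,w|<\pi/2\}$. Since $L$ sends $\{|z|=r\}$ to $\{\mathsf{Re}\,w=\log r\}$, the vertical cross-section of $\widetilde\Omega$ at abscissa $\log r$ containing the point $\log r$ has Euclidean length exactly $\eta_\Omega(r)$, by the very definition of $\eta_\Omega$. Property \ref{prty_cone} forces $\eta_\Omega(r)\to\pi$, so $\widetilde\Omega$ fills out the strip $\{|\mathsf{Im}\,w|<\pi/2\}$ at $+\infty$; since $\Omega\subset\mathbb H$, one has $\eta_\Omega(r)\le\pi$, hence $\tfrac{1}{\eta_\Omega(r)}-\tfrac{1}{\pi}\ge 0$.

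The analytic core is the extremal-length estimate
\[
k_{\widetilde\Omega}(\log r_0,\log r)\;=\;\frac{\pi}{2}\int_{r_0}^{r}\frac{dt}{t\,\eta_\Omega(t)}+O(1)\qquad\text{as }r\to+\infty,
\]
in which $\pi/(2\eta)$ is exactly the hyperbolic density along the centerline of a flat strip of width $\eta$. Subtracting $\tfrac{1}{2}\log r=\tfrac{\pi}{2}\int_{r_0}^{r}\tfrac{1}{\pi}\tfrac{dt}{t}+O(1)$ and using conformal invariance $k_\Omega=L^{*}k_{\widetilde\Omega}$ gives
\[
k_\Omega(h(0),r)-\tfrac{1}{2}\log r\;=\;\frac{\pi}{2}\int_{r_0}^{r}\left(\frac{1}{\eta_\Omega(t)}-\frac{1}{\pi}\right)\frac{dt}{t}+O(1),
\]
and since the integrand is nonnegative this is bounded as $r\to+\infty$ if and only if \eqref{int1} converges, giving the claimed equivalence.

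The main obstacle is the extremal-length identity above: $\widetilde\Omega$ is not a flat strip and its boundary may meander inside $\{|\mathsf{Im}\,w|<\pi/2\}$, so one cannot just integrate the strip density naively. I would partition $[\log r_0,\log r]$ into short subintervals $[u_{k-1},u_k]$ and sandwich the conformal modulus of each quadrilateral $\widetilde\Omega\cap\{u_{k-1}<\mathsf{Re}\,w<u_k\}$ (with the two vertical $a$-sides) between the moduli of inscribed and circumscribed flat rectangles whose widths are the infimum and supremum of $\eta_\Omega(e^u)$ on the subinterval, using Gr\"otzsch's inequality for the lower bound and domain monotonicity for the upper. Summing and passing to a Riemann-sum limit produces the integral; controlling that the accumulated $O(1)$ errors stay uniformly bounded is the delicate point, and this is where the normalization that $\mathbb H$ is the \emph{smallest} vertical half-plane containing $\Omega$ plays its role, preventing the centerline of $\widetilde\Omega$ from drifting away from the real axis as $\mathsf{Re}\,w\to+\infty$.
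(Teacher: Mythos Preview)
The paper does not prove this theorem; it is quoted from Karamanlis \cite{Kar}, and the paper only remarks that his argument proceeds ``by means of estimates of harmonic measures.'' Your reduction of finite shift to boundedness of $k_\Omega(h(0),r)-\tfrac12\log r$ along the real axis is correct and mirrors the paper's own reasoning for $\ref{t_sh}\Leftrightarrow\ref{t_hy}$, and the logarithmic change of variables is a natural move. So up to that point your outline is fine, and it is a genuinely different route from Karamanlis' harmonic-measure approach.

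The gap is in the ``analytic core'': the asserted identity
\[
k_{\widetilde\Omega}(\log r_0,\log r)=\frac{\pi}{2}\int_{r_0}^{r}\frac{dt}{t\,\eta_\Omega(t)}+O(1)
\]
is essentially the whole theorem, and your sandwich argument does not deliver the upper bound. The lower bound is indeed standard: the Ahlfors--Beurling inequality $\lambda(Q_k)\ge\int_{u_{k-1}}^{u_k}du/\theta(u)$ plus superadditivity of moduli in series gives ``divergent integral $\Rightarrow$ infinite shift.'' For the converse you would need an \emph{upper} bound on the total modulus, but summing upper bounds on the $\lambda(Q_k)$ gives nothing (series moduli are superadditive, not subadditive), so the circumscribed-rectangle step cannot close the argument. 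Worse, circumscribed rectangles of height $\sup_u\eta_\Omega(e^u)$ need not exist: the slice $\widetilde\Omega\cap\{\mathsf{Re}\,w=u\}$ can have several components, and even the component through $u$ is not centered on the real axis. Finally, the hypothesis ``$\mathbb H$ is the smallest half-plane containing $\Omega$'' only pins down the left edge of $\Omega$ (it says $\inf\{\mathsf{Re}\,z:z\in\Omega\}=0$); it does not control the centerline of $\widetilde\Omega$ near $+\infty$ as you suggest. Obtaining the upper bound on $k_\Omega$ genuinely requires either Karamanlis' harmonic-measure estimates or a careful Ahlfors second-fundamental-inequality argument with explicit control of the drift, and that is precisely the nontrivial content of the theorem.
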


\begin{remark}\label{rem-kar}
	Since $\eta_\Omega(r)\in(\pi/2,\pi]$ for $r\geq r_0$, the integral \eqref{int1} is comparable with
	\begin{equation}\label{int2}
		\int_{r_0}^{\infty}\frac{\pi-\eta_\Omega(r)}{r}\,\mathrm{d}r
	\end{equation}
\end{remark}

In the proof of Theorem \ref{thm_karamanlis} developed in \cite{Kar} it is shown, by means of estimates of harmonic measures, that if $\Omega$ is a domain as above and, for $M>0$, $\Omega_M$ is the maximal domain contained in $\Omega$ whose boundary is of the form ${g(y)+iy,\,y\in\mathbb{R}}$ where $g\in\mathrm{Lip}_M(\mathbb{R},\mathbb{R}^+)$, then 
\begin{equation*}
	\int_{r_0}^{\infty}\frac{\pi-\eta_\Omega(r)}{r}\,\mathrm{d}r<+\infty\implies \int_{r_0}^{\infty}\frac{\pi-\eta_{\Omega_M}(r)}{r}\,\mathrm{d}r<+\infty,
\end{equation*}
assuming that $r_0\in\Omega_M$ without loss of generality; note that the converse is trivial since $\eta_{\Omega_M}\leq \eta_\Omega$. The same argument holds when $\Omega_M$ is replaced by the domain $\Omega_\ast$ without any significant change. $\Omega_\ast$ is the maximal domain contained in $\Omega$ which is starlike at infinity \emph{in the positive direction of the real axis}. It is easy to show that it has all the properties of a Koenigs domain. Then one concludes the following:

\begin{proposition}\label{prop-ast}
	Let $\Omega$ be as in Theorem \ref{main}. Then $\Omega$ is associated to a semigroup of finite shift if and only if the same is true for $\Omega_\ast$.
\end{proposition}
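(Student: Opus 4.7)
The plan is to reduce both sides of the equivalence to Karamanlis' integral criterion (Theorem \ref{thm_karamanlis}) and then compare the two resulting integrals. First, I would verify that $\Omega_\ast$ is itself the Koenigs domain of a parabolic semigroup of positive hyperbolic step, contained in $\mathbb{H}$ and satisfying Property \ref{prty_cone} whenever $\Omega$ does. Openness and simple connectedness of $\Omega_\ast$ are standard for the starlike-fication, and I would cite \cite{BR}. Starlikeness of $\Omega_\ast$ at infinity in the positive imaginary direction follows from the identity $(z+is)+t=(z+t)+is$: if $z\in\Omega_\ast$ and $s\geq 0$, then for any $t\geq 0$ one has $z+t\in\Omega$ and, by starlikeness of $\Omega$ in the imaginary direction, $(z+t)+is\in\Omega$, which gives $z+is\in\Omega_\ast$. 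Property \ref{prty_cone} transfers from $\Omega$ to $\Omega_\ast$ because each sector $\Gamma(\beta,r_\beta)$ is closed under addition of positive reals. Hence Theorem \ref{thm_karamanlis} applies to both semigroups, and by Remark \ref{rem-kar} each is of finite shift if and only if the corresponding integral
\[
\int_{r_0}^{\infty}\frac{\pi-\eta_\Omega(r)}{r}\,\mathrm{d}r,\qquad \int_{r_0}^{\infty}\frac{\pi-\eta_{\Omega_\ast}(r)}{r}\,\mathrm{d}r
\]
converges.

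For the implication \emph{$\Omega_\ast$ of finite shift $\Rightarrow$ $\Omega$ of finite shift}, the inclusion $\Omega_\ast\subset\Omega$ forces the connected component of $\Omega_\ast\cap\{|z|=r\}$ containing $r$ to sit inside the corresponding component of $\Omega\cap\{|z|=r\}$, hence $\eta_{\Omega_\ast}(r)\leq\eta_\Omega(r)$ for all $r\geq r_0$. Therefore $\pi-\eta_\Omega(r)\leq\pi-\eta_{\Omega_\ast}(r)$, and convergence of the integral for $\Omega_\ast$ immediately gives convergence for $\Omega$; Karamanlis' criterion concludes.

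The converse is the substantial content of the proposition, and it is here that I would directly invoke the harmonic-measure argument of Karamanlis from \cite{Kar}. In that paper the implication
\[
\int_{r_0}^{\infty}\frac{\pi-\eta_\Omega(r)}{r}\,\mathrm{d}r<+\infty\;\Longrightarrow\;\int_{r_0}^{\infty}\frac{\pi-\eta_{\Omega_M}(r)}{r}\,\mathrm{d}r<+\infty
\]
is proved for the maximal Lipschitz starlike subdomain $\Omega_M$ by estimating how much angular width is lost in passing from $\Omega$ to $\Omega_M$ via bounds on the harmonic measure of the boundary pieces cut away. The main (indeed only) obstacle is to check that those harmonic-measure bounds exploit solely the starlikeness of the subdomain in the positive real direction and are insensitive to the specific $\mathrm{Lip}_M$ control on its boundary. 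Granted this, the proof in \cite{Kar} carries over verbatim with $\Omega_\ast$ in place of $\Omega_M$, since by definition $\Omega_\ast$ is the maximal subdomain of $\Omega$ starlike at infinity in the positive real direction. Combining the two implications completes the proof.
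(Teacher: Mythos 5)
Your proposal is correct and follows essentially the same route as the paper: reduce both domains to Karamanlis' integral criterion, dispose of one implication via the monotonicity $\eta_{\Omega_\ast}\leq\eta_\Omega$, and obtain the other by transferring Karamanlis' harmonic-measure argument from $\Omega_M$ to $\Omega_\ast$. The paper simply asserts that this transfer works ``without any significant change,'' whereas you explicitly flag the need to check that the estimates use only starlikeness in the positive real direction --- a reasonable point, but not a different method.
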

Let consider a strictly increasing sequence of real numbers $\{a_k\}_{k\geq 0}$ so that $a_0=0$ and $\lim\limits_{k\to +\infty}a_k=+\infty$, and another non-decreasing sequence $\{b_k\}_{k\geq 1}$ of positive real numbers such that $b_k\to+\infty$ and $a_k/b_k\to 0$ as $k\to+\infty$. Let $S_k:=\{z\in\mathbb{H}:a_{k-1}\leq \mathsf{Re}\,z \leq a_k,\,\mathsf{Im}\,z\leq -b_k\}$ for any $k\in\mathbb{N}$ and consider the `step' domain in the right half-plane
\[
\Sigma=\Sigma(a_k,b_k):=\mathbb{H}\setminus\bigcup\limits_{k=1}^{\infty}S_k,
\]
which is a simply connected domain and moreover it is starlike at infinity in the positive direction of the imaginary axis: see Figure \ref{step_domain}.
We can therefore construct a parabolic semigroup $(\phi_t^\Sigma)$ in $\mathbb{D}$ with positive hyperbolic step and having $\Sigma$ as an associated Koenigs domain.
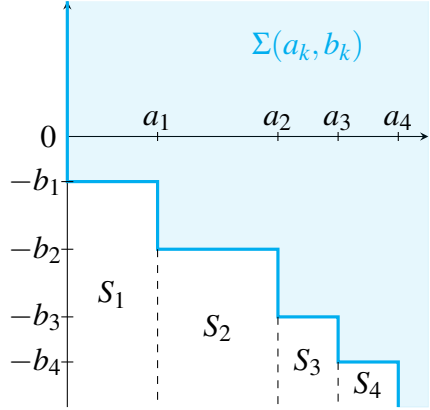
\begin{figure}[htb]
	\centering
	\begin{tikzpicture}[xscale=0.8, yscale=0.6]
		\draw [-stealth] (0,-6)--(0,3);
		\draw [-stealth] (0,0)--(6,0);
		\draw [very thick,cyan] (0,3) -- (0,-1) -- (1.5,-1) -- (1.5,-2.5) -- (3.5,-2.5) -- (3.5,-4) -- (4.5,-4) -- (4.5,-5) -- (5.5,-5) -- (5.5,-6);
		\draw [cyan,fill, opacity=0.1] (0,3) -- (0,-1) -- (1.5,-1) -- (1.5,-2.5) -- (3.5,-2.5) -- (3.5,-4) -- (4.5,-4) -- (4.5,-5) -- (5.5,-5) -- (5.5,-6)--(6,-6)--(6,3);
		\node [anchor=east] at (0,0) {$0$};
		\draw (1.5,0.1) -- (1.5,-0.1) node[anchor=south] {$a_1$};
		\draw (3.5,0.1) -- (3.5,-0.1) node[anchor=south] {$a_2$};
		\draw (4.5,0.1) -- (4.5,-0.1) node[anchor=south] {$a_3$};
		\draw (5.5,0.1) -- (5.5,-0.1) node[anchor=south] {$a_4$};
		\draw (-0.1,-1) -- (0.1,-1) node[anchor=east] {$-b_1$};
		\draw (-0.1,-2.5) -- (0.1,-2.5) node[anchor=east] {$-b_2$};
		\draw (-0.1,-4) -- (0.1,-4) node[anchor=east] {$-b_3$};
		\draw (-0.1,-5) -- (0.1,-5) node[anchor=east] {$-b_4$};
		\node at (4,2) {\textcolor{cyan}{$\Sigma(a_k,b_k)$}};
		\draw [dashed] (1.5,-2.5)--(1.5,-6);
		\draw [dashed] (3.5,-4)--(3.5,-6);
		\draw [dashed] (4.5,-5)--(4.5,-6);
		\node at (0.75,-3.5) {$S_1$};
		\node at (2.5,-4.25) {$S_2$};
		\node at (4,-5) {$S_3$};
		\node at (5,-5.5) {$S_4$};
	\end{tikzpicture}
	\caption{The domain $\Sigma(a_k,b_k)$.}
	\label{step_domain}
\end{figure}
\begin{question}\label{quest-step}
	Which properties of the two sequences $\{a_k\},\{b_k\}$ characterize the shift of $(\phi_t^\Sigma)$?
\end{question}

By the assumption $a_k/b_k\to0$, $\Sigma$ satisfies the hypothes of Theorem \ref{thm_karamanlis}, so by Remark \ref{rem-kar} the problem reduces to the study of the convergence of \eqref{int2} for $\Omega=\Sigma$. Let  $c_k:=|a_{k-1}-i b_k|$ and $d_k:=|a_k-i b_k|$ for any $k\in\mathbb{N}$. We may choose $r_0=c_1$ for instance.

Suppose that $r\in[c_k,d_k]$. Then
\[
\frac{\pi-\eta_\Sigma(r)}{r}=\frac{1}{r}\,\arctan \frac{\sqrt{r^2-b_k^2}}{b_k}.
\]
Note that $\sqrt{r^2-b_k^2}/b_k\leq a_{k-1}/b_k\to 0$. So for $k$ big enough it is $0<\sqrt{r^2-b_k^2}/b_k<1$. Recalling that for $x\in(0,1)$ it is $x>\arctan x > (\pi/4)x$, it is
\[
\frac{\pi}{4}\,\frac{a_{k-1}}{b_k\,c_k}\leq\frac{\pi-\eta_\Sigma(r)}{r}\leq\frac{a_k}{b_k\,d_k}
\] for any $r\in [c_k,d_k]$, $k\gg 1$. By our assumptions it is $c_k\sim b_k$, then for possibly larger $k$ it is also $2b_k > c_k$. Hence there exists $k_1\in \mathbb{N}$ such that for any index $k\geq k_1$ it is
\[
\int_{c_k}^{d_k}\frac{\pi-\eta_\Sigma(r)}{r}\,\mathrm{d}r\leq 2(d_k-c_k)\frac{a_k}{c_k\,d_k}=2\left(\frac{1}{c_k}-\frac{1}{d_k}\right)a_k
\]
and
\[
\int_{c_k}^{d_k}\frac{\pi-\eta_\Sigma(r)}{r}\,\mathrm{d}r\geq \frac{\pi}{4}(d_k-c_k)\frac{a_k}{c_k\,d_k}=\frac{\pi}{4}\left(\frac{1}{c_k}-\frac{1}{d_k}\right)a_{k-1}.
\]

Now let $r\in[d_k,c_{k+1}]$. Here
\[
\frac{\pi-\eta_\Sigma(r)}{r}=\frac{1}{r}\,\arctan \frac{a_k}{\sqrt{r^2-a_k^2}}.
\]
Note that $a_k/\sqrt{r^2-a_k^2}\leq a_k/b_k\to 0$. Moreover $\sqrt{r^2-a_k^2}\geq r-a_k$. For $k$ sufficiently big $b_k\geq \sqrt{3}a_k$, hence $r-a_k\geq d_k-a_k\geq a_k$ implying that $r-a_k\geq r/2$. We deduce the following inequalities as $k\geq k_2$ for $k_2\in\mathbb{N}$ big enough:
\[
\frac{\pi}{4}\frac{a_k}{r^2}\leq\frac{\pi-\eta_\Sigma(r)}{r}\leq 2\,\frac{a_k}{r^2}\qquad(r\in[d_k,c_{k+1}]),
\]
\[
\int_{d_k}^{c_{k+1}}\frac{\pi-\eta_\Sigma(r)}{r}\,\mathrm{d}r\leq 2a_k\int_{d_k}^{c_{k+1}}\frac{\mathrm{d}r}{r^2}=2\left(\frac{1}{d_k}-\frac{1}{c_{k+1}}\right)a_k,
\]
\[
\int_{d_k}^{c_{k+1}}\frac{\pi-\eta_\Sigma(r)}{r}\,\mathrm{d}r\geq \frac{\pi}{4}a_k\int_{d_k}^{c_{k+1}}\frac{\mathrm{d}r}{r^2}=\frac{\pi}{4}\left(\frac{1}{d_k}-\frac{1}{c_{k+1}}\right)a_k.
\]

Let $k_0=\max\{k_1,k_2\}$. From all these inequalities, the integral \eqref{int2} is smaller than a constant plus
\[
\int_{c_{k_0}}^\infty\frac{\pi-\eta_\Sigma(r)}{r}\,\mathrm{d}r\leq 2\sum_{k=k_0}^{\infty}\left(\frac{1}{c_k}-\frac{1}{c_{k+1}}\right)a_k=\frac{2a_{k_0}}{c_{k_0}}+2\sum_{k=k_0+1}^{\infty}\frac{a_k-a_{k-1}}{c_k},
\]
while it can be estimated from below by
\[
\int_{d_{k_0}}^\infty\frac{\pi-\eta_\Sigma(r)}{r}\,\mathrm{d}r\geq \frac{\pi}{4}\sum_{k=k_0}^{\infty}\left(\frac{1}{d_k}-\frac{1}{d_{k+1}}\right)a_k=\frac{\pi}{4}\,\frac{a_{k_0}}{d_{k_0}}+\frac{\pi}{4}\sum_{k=k_0+1}^{\infty}\frac{a_k-a_{k-1}}{d_k}.
\]

But since $\lim\limits_{k\to +\infty}c_k/b_k=\lim\limits_{k\to +\infty}d_k/b_k=1$, we have the following conclusion, giving an answer to Question \ref{quest-step}:
\begin{proposition}
	The semigroup $(\phi_t^\Sigma)$ is of finite shift if and only if
	\begin{equation}\label{series-step}
		\sum_{k=1}^{\infty} \frac{a_k-a_{k-1}}{b_k}<+\infty.
	\end{equation}
\end{proposition}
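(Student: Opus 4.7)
The plan is to combine the integral criterion of Theorem \ref{thm_karamanlis} (with Remark \ref{rem-kar}) with the piecewise pointwise bounds for $\eta_\Sigma$ derived in the discussion preceding the statement. First I would verify that the hypotheses of Theorem \ref{thm_karamanlis} apply: the assumption $a_k/b_k\to 0$ forces $\Sigma$ to contain cones of any aperture near infinity (so Property \ref{prty_cone} holds) and makes $\mathbb{H}$ the smallest vertical half-plane containing $\Sigma$; hence $(\phi_t^\Sigma)$ is of finite shift if and only if
\[
\int_{c_{k_0}}^{\infty}\frac{\pi-\eta_\Sigma(r)}{r}\,\mathrm{d}r<+\infty
\]
for $k_0:=\max\{k_1,k_2\}$ chosen as above (so that $[c_{k_0},+\infty)\subset\Sigma$).

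Next I would split the ray $[c_{k_0},+\infty)$ into the consecutive sub-intervals $[c_k,d_k]$ (on which the circle $\{|z|=r\}$ meets a \emph{horizontal} edge of some $S_k$) and $[d_k,c_{k+1}]$ (on which it meets a \emph{vertical} edge). The four pointwise inequalities already established bound $(\pi-\eta_\Sigma(r))/r$ on each sub-interval above and below by a constant multiple of $a_k/(c_k d_k)$ and $a_k/r^2$ respectively, and a direct integration followed by telescoping yields the sandwich
\[
\tfrac{\pi}{4}\sum_{k\geq k_0+1}\frac{a_k-a_{k-1}}{d_k}\;\leq\;\int_{c_{k_0}}^{\infty}\frac{\pi-\eta_\Sigma(r)}{r}\,\mathrm{d}r\;\leq\;2\sum_{k\geq k_0+1}\frac{a_k-a_{k-1}}{c_k}
\]
up to additive constants depending only on $k_0$.

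Finally, since $c_k/b_k\to 1$ and $d_k/b_k\to 1$, the two bracketing series are comparable term by term, for $k$ sufficiently large, to $\sum_{k}(a_k-a_{k-1})/b_k$. Thus the integral converges if and only if \eqref{series-step} does, which by the first paragraph is equivalent to finite shift of $(\phi_t^\Sigma)$. The nontrivial analytic input — Theorem \ref{thm_karamanlis} and the pointwise control of $\eta_\Sigma$ in each annular regime — has already been carried out, so I do not expect any genuine obstacle: the remaining work is the elementary telescoping and the asymptotic replacement $c_k,d_k\sim b_k$ just described.
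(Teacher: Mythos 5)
Your proposal follows essentially the same route as the paper: reduction to the Karamanlis integral via Remark \ref{rem-kar}, the split of $[c_{k_0},+\infty)$ into the arcs meeting horizontal versus vertical edges with the stated pointwise bounds, telescoping, and the final replacement $c_k,d_k\sim b_k$. The argument is correct as outlined and matches the paper's proof step for step.
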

Going back to the domain $\Omega$, let assume that the Property \ref{prty_cone} holds for it. By Proposition \ref{prop-ast}, it is associated to a semigroup of finite shift if and only if $\Omega_\ast$ does. Take $a_0=0$, $a_k=n_0-1+k$ and $b_k=b(a_k)$ for $k\geq 1$. Then the step domain $\Sigma=\Sigma(a_k,b_k)$ must contain $\Omega_\ast$ as this last one is starlike at infinity in the positive direction of both real and imaginary axis (see Figure \ref{step_omega}): hence $\pi-\eta_\Sigma(r)\leq \pi-\eta_{\Omega_\ast}(r)$ and
\[
\int_{n_0}^\infty\frac{\pi-\eta_{\Omega_\ast}(r)}{r}\,\mathrm{d}r\geq\int_{n_0}^\infty\frac{\pi-\eta_{\Sigma}(r)}{r}\,\mathrm{d}r.
\]
The last integral compares with the series given in \eqref{series-step} which in this case is
\[
\frac{n_0}{b(n_0)}+\sum_{k=2}^{\infty}\frac{1}{b(n_0+k-1)}
\]
and this last one compares with $\sum_{j= n_0}^{\infty}1/b(j)$. Hence when $\sum_{j= n_0}^{\infty}1/b(j)=+\infty$, $\Omega_\ast$ is associated to a semigroup of infinite shift. On the other side consider the other `step' domain $\Sigma'=\Sigma(a'_k,b'_k)$ where \[
a'_0=0,\quad a'_k=a_{k+1},\quad b'_k=b_k\quad\text{for }k\in\mathbb{N}.
\] Then $\Sigma'\cap\{\mathsf{Re}\,z>n_0\}\subset \Omega_\ast\cap \{\mathsf{Re}\,z>n_0\}$, as shown in Figure \ref{step_omega}. If $r\geq 2n_0$, then $\eta_{\Sigma'}(r)-\eta_{\Omega_\ast}(r)$ is bounded from above by
\[
\arctan \frac{n_0}{\sqrt{r^2-n_0^2}}\leq\frac{2n_0}{r},
\]
so
\[
\int_{2n_0}^\infty\frac{\pi-\eta_{\Omega_\ast}(r)}{r}\,\mathrm{d}r\leq2n_0\int_{2n_0}^{\infty}\frac{\mathrm{d}r}{r^2}+\int_{2n_0}^\infty\frac{\pi-\eta_{\Sigma'}(r)}{r}\,\mathrm{d}r=1+\int_{2n_0}^\infty\frac{\pi-\eta_{\Sigma'}(r)}{r}\,\mathrm{d}r.
\]

The right hand side converges if and only if it is convergent the series \[\sum_{k=1}^{\infty} \frac{a'_k-a'_{k-1}}{b'_k}=\frac{n_0+1}{b(n_0)}+\sum_{k=2}^{\infty}\frac{a_{k+1}-a_{k}}{b_k}=\frac{n_0+1}{b(n_0)}+\sum_{k=2}^{\infty}\frac{1}{b(n_0+k-1)}.\] So if $\sum_{j=n_0}^{\infty}1/b(j)<+\infty$, then the semigroup associated to $\Omega_\ast$ is of finite shift. This concludes the proof of $\ref{t_sh}\Leftrightarrow\ref{t_ser}$ of Theorem \ref{main}.

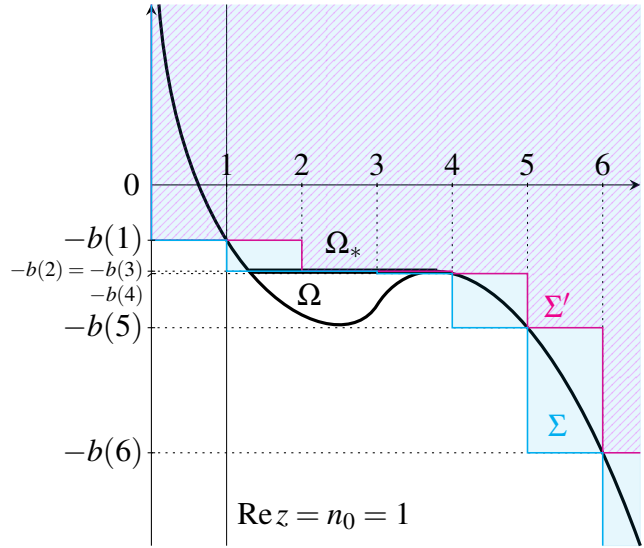
\begin{figure}[htb]
	\begin{tikzpicture}[yscale=0.8]
		\usetikzlibrary{patterns}
		\draw [-stealth] (0,-6)--(0,3);
		\draw [-stealth] (0,0)--(6.5,0);
		\draw [very thin] (1,3)--(1,-5.5) node [anchor=west] {$\mathrm{Re}\,z=n_0=1$}--(1,-6);
		\draw [dotted](1,0)--(1,-0.92)--(0,-0.92);
		\draw [dotted](2,0)--(2,-1.435);
		\draw [dotted](3,0)--(3,-1.435)--(0,-1.435);
		\draw [dotted](4,0)--(4,-1.475)--(0,-1.475);
		\draw [dotted](5,0)--(5,-2.375)--(0,-2.375);
		\draw [dotted](6,0)--(6,-4.455)--(0,-4.455);
		\draw [very thick](0.1,3) .. controls (0.4,-1.8) and (2.5,-3) .. (3,-2) .. controls (3.5,-1) and (5,-0.8) .. (6.5,-6);
		\draw [line width=2](1.3,-1.435)--(3.8,-1.435);
		\path [pattern color=magenta, pattern=north east lines, opacity=0.5] (0,3)--(0,-0.92)--(2,-0.92)--(2,-1.435)--(4,-1.435)--(4,-1.475)--(5,-1.475)--(5,-2.375)--
		(6,-2.375)--(6,-4.455)--(6.5,-4.455)--(6.5,3);
		\draw [magenta, semithick] (0,3)--(0,-0.92)--(2,-0.92)--(2,-1.435)--(4,-1.435)--(4,-1.475)--(5,-1.475)--(5,-2.375)--
		(6,-2.375)--(6,-4.455)--(6.5,-4.455);
		\draw [cyan, fill, opacity=0.1] (0,3)--(0,-0.92)--(1,-0.92)--(1,-1.435)--(3,-1.435)--(3,-1.475)--(4,-1.475)--(4,-2.375)--
		(5,-2.375)--(5,-4.455)--(6,-4.455)--(6,-6)--(6.5,-6)--(6.5,3);
		\draw [cyan, semithick] (0,3)--(0,-0.92)--(1,-0.92)--(1,-1.435)--(3,-1.435)--(3,-1.475)--(4,-1.475)--(4,-2.375)--
		(5,-2.375)--(5,-4.455)--(6,-4.455)--(6,-6);
		\node at (2.1,-1.81) {$\Omega$};
		\node at (2.56,-1) {$\Omega_\ast$};
		\draw (1,1pt) -- (1,-1pt) node[anchor=south] {$1$};
		\draw (2,1pt) -- (2,-1pt) node[anchor=south] {$2$};
		\draw (3,1pt) -- (3,-1pt) node[anchor=south] {$3$};
		\draw (4,1pt) -- (4,-1pt) node[anchor=south] {$4$};
		\draw (5,1pt) -- (5,-1pt) node[anchor=south] {$5$};
		\draw (6,1pt) -- (6,-1pt) node[anchor=south] {$6$};
		\node at (0,0)[anchor=east] {$0$};
		\draw (-0.05,-0.92) -- (0.05,-0.92) node[anchor=east] {$-b(1)$};
		\draw (-0.05,-1.435) -- (0.05,-1.435) node[anchor=east] {{\tiny $-b(2)=-b(3)$}};
		\draw (-0.05,-1.525) -- (0,-1.475) -- (0.05,-1.475) node[anchor=north east] {{\tiny $-b(4)$}};
		\draw (-0.05,-2.375) -- (0.05,-2.375) node[anchor=east] {$-b(5)$};
		\draw (-0.05,-4.455) -- (0.05,-4.455) node[anchor=east] {$-b(6)$};
		\node at (5.4,-4) {\textcolor{cyan}{$\Sigma$}};
		\node at (5.4,-2) {\textcolor{magenta}{$\Sigma'$}};
	\end{tikzpicture}
	\caption{The step domains $\Sigma$ and $\Sigma'$ associated to $\Omega_\ast$. Here $n_0=1$, $a_k=k$, $a_k'=k+1$ and $b_k=b_k'=b(k)$.}
	\label{step_omega}
\end{figure}

\begin{remark}
	The argument above can be easily generalized by taking any partition $\{a_k\}_{k\geq 1}$ of $[r_0,+\infty)$ such that $0<\inf_{k\geq 1}\,(a_{k+1}-a_{k})\leq\sup_{k\geq 1}\,(a_{k+1}-a_{k})<+\infty$: if $b(a_k):=\inf \{y\in\mathbb{R}_+:a_k-iy\notin \Omega_\ast\}$, then the shift of a semigroup associated to $\Omega$ is finite if and only if \[
	\sum_{k=1}^{\infty}\frac{1}{b(a_k)}<+\infty.
	\]
	If we only assume that there exists $C>0$ such that for all $k$ it is $a_{k+1}-a_k<C(a_k-a_{k-1})$, then by using the same argument one gets the same result where the series becomes \[\sum_{k=2}^{\infty}\frac{a_k-a_{k-1}}{b(a_k)}.\]
\end{remark}

%

\section*{Acknowledgments} I would like to thank professor Filippo Bracci for the help and all the improvements he suggested to me in writing down this article.
I would also like to thank the reviewer for the provided comments, corrections and further improvements.

\end{document}